\numberwithin{equation}{section}
\newtheorem{thm}{Theorem}[section]
\newtheorem{lemma}[thm]{Lemma}
\newtheorem{corollary}[thm]{Corollary}
\newtheorem{remark}[thm]{Remark}
\newtheorem{definition}[thm]{Definition}
\renewcommand{\(}{\left(}
\renewcommand{\)}{\right)}
\newcommand{\lj}{\left|}
\newcommand{\rj}{\right|}
\newcommand{\E}{{\rm E}}
\newcommand{\tr}{{\rm tr}}
\newcommand{\mb}{\mathbf}
\newcommand{\hi}{H\"older's inequality}
\begin{document}
\title[SPECTRUM CONVERGENCE RATES OF QUATERNION MATRICES]{CONVERGENCE RATES OF THE SPECTRAL DISTRIBUTIONS OF LARGE RANDOM QUATERNION SELF-DUAL HERMITIAN MATRICES}

\author{Yanqing Yin,\ \ Zhidong Bai}
\thanks{Y. Q. Yin was partially supported by a grant CNSF 11301063; Z. D. Bai was partially supported by CNSF  11171057 and PCSIRT
}

\address{KLASMOE and School of Mathematics \& Statistics, Northeast Normal University, Changchun, P.R.C., 130024.}
\email{yinyq799@nenu.edu.cn}
\address{KLASMOE and School of Mathematics \& Statistics, Northeast Normal University, Changchun, P.R.C., 130024.}
\email{baizd@nenu.edu.cn}

\subjclass{Primary 15B52, 60F15, 62E20;
Secondary 60F17} \keywords{Convergence rates, GSE, Random matrix, Quaternion self-dual Hermitian matrices}

\maketitle
\begin{abstract}
In this paper, convergence rates of the spectral distributions of quaternion self-dual Hermitian matrices are investigated. We show that under conditions of finite 6th moments, the expected spectral distribution of a large quaternion self-dual Hermitian matrix converges to the semicircular law in a rate of $O(n^{-1/2})$ and the spectral distribution itself converges to the semicircular law in rates $O_p(n^{-2/5})$ and $O_{a.s.}(n^{-2/5+\eta})$. Those results include \emph{GSE} as a special case.
\end{abstract}

\section{Introduction and Main results}
Research on the Limiting Spectral Analysis (LSA) of Large Dimensional Random Matrices (LDRM) has attracted considerable interest among mathematicians, probabilitists and statisticians since modern computers are widely applied to very disciplines in 70's of last centure. In nuclear physics, the energy levels are described by the eigenvalues of the Hamiltonian, which can be formulated as an Hermitian matrix of very large order. In accordance with the consequences of time-reversal invariance, three kinds of ensembles were proposed:
 \begin{itemize}
 \item orthogonal ensemble: the set of $n\times n$ random real symmetric matrices,
 \item unitary ensemble:  the set of $n\times n$ random complex Hermitian matrices,
 \item symplectic ensemble: the set of $n\times n$ random quaternion self-dual Hermitian matrices.
 \end{itemize}
For every single matrix in those ensembles, the entries on and above the diagonal are usually assumed to be independent. A matrix in unitary ensemble (including orthogonal ensemble) is also called a Wigner matrix.

Here, for readers' convenience, we shall give a short introduction to the quaternion self-dual Hermitian matrices. Quaternions were invented in 1843 by the Irish mathematician William Rowen Hamilton as an extension of complex numbers into three dimensions \cite{hamilton1866elements,1973},
and it's well known that the quaternion field $\mathbb{Q}$ can be represented as a two-dimensional complex vector space \cite{chevalley1946lie}.
 Thus this representation associates any $n\times n$ quaternion matrix with a $2n\times2n$ complex matrix.
Let $i=\sqrt{-1}$ and define three $2\times 2$ matrices:
$$
\mb{i}_1=\left(
              \begin{array}{cc}
                i & 0 \\
                0 & -i \\
              \end{array}
            \right), \quad
\mb{i}_2=\left(
              \begin{array}{cc}
                0 & 1 \\
                -1 & 0 \\
              \end{array}
            \right), \quad
\mb{i}_3=\left(
              \begin{array}{cc}
                0 & i \\
                i & 0 \\
              \end{array}
            \right).
$$
Note that in what follows, we use $\mb I_n$ to denote $n \times n$ unit matrix.
We can verify that:
 \begin{gather*}
\mathbf{i}_1^2=\mathbf{i}_2^2=\mathbf{i}_3^2=-\mathbf{I}_2,\quad \mathbf{i}_1 =\mathbf{i}_2 \mathbf{i}_3 =-\mathbf{i}_3 \mathbf{i}_2 ,\\
 \mathbf{i}_2 =\mathbf{i}_3 \mathbf{i}_1 =-\mathbf{i}_1 \mathbf{i}_3 ,\quad \mathbf{i}_3 =\mathbf{i}_1 \mathbf{i}_2 =-\mathbf{i}_2 \mathbf{i}_1 .
\end{gather*}
For any real numbers $a,b,c,d,$ let
$$q=a\mathbf{I}_2+b\mathbf{i}_1+c\mathbf{i}_2+d\mathbf{i}_3=\left(
                                                      \begin{array}{cc}
                                                        a+bi & c+di \\
                                                        -c+di & a-bi \\
                                                      \end{array}
                                                    \right)=\left(
                                                              \begin{array}{cc}
                                                                \alpha & \beta \\
                                                             -\bar{\beta} & \bar{\alpha} \\
                                                              \end{array}
                                                            \right),
$$
then $q$ is a quaternion. The quaternion conjugate of $q$ is defined by
$$\bar{q}=a\mathbf{I}_2-b\mathbf{i}_1-c\mathbf{i}_2-d\mathbf{i}_3=\left(
                                                      \begin{array}{cc}
                                                        a-bi & -c-di \\
                                                        c-di & a+bi \\
                                                      \end{array}
                                                    \right)=\left(
                                                              \begin{array}{cc}
                                                                \bar{\alpha} & -\beta \\
                                                             \bar{\beta} & \alpha \\
                                                              \end{array}
                                                            \right),
$$
and the quaternion norm of $q$ is defined by
$$\| q \| = \sqrt {{a^2} + {b^2} + {c^2} + {d^2}}  = \sqrt {{{\left| \alpha  \right|}^2} + {{\left| \beta  \right|}^2}}. $$

An $n\times n$ quaternion self-dual Hermitian matrix $^q\mathbf{A}_n=(x_{jk})_{n\times n}$ is a matrix whose entries $x_{jk} \ (j,k=1,\cdots,n)$ are quaternions and satisfy $x_{jk}=\bar{x}_{kj}$. If each $x_{jk}$ is viewed as a $2\times 2$ matrix, then $\mathbf A_n=(x_{jk})$ is in fact a $2n\times2n$ Hermitian matrix.
And, the entries of $\mathbf A_n$ can be represented as
$$x_{jk} = \left( {\begin{array}{*{20}{c}}
a_{jk} + b_{jk}i & c_{jk} + d_{jk}i \\
-{ c }_{jk}+{ d }_{jk}i&{a }_{jk}-{ b }_{jk}i
\end{array}} \right)
=\left( {\begin{array}{*{20}{c}}
\alpha_{jk} &\beta_{jk} \\
{ - \overline \beta }_{jk}&{\overline \alpha }_{jk}
\end{array}} \right), 1\le j<k\le n, $$
 and
$x_{jj} = \left( {\begin{array}{*{20}{c}}
a_{jj} & 0 \\
0& a_{jj}
\end{array}} \right),$ where $ a_{jk},b_{jk},c_{jk},d_{jk} \in \mathbb{R} \ {\rm and} \ 1\le j,k \le n.$
It is well known (see \cite{zhang1997quaternions}) that the multiplicities of all the eigenvalues of $\mathbf A_n$ are even.

Suppose that an $n\times n$ matrix $\mathbf H_n$ is Hermitian whose eigenvalues are denoted by  $\lambda_1\geq \lambda_2\geq\cdots\geq \lambda_n$.
Then the empirical spectral distribution (ESD) of $\mathbf H_n$ is defined as ${F^{\mathbf{H_n}}}(x) = \frac{1}{n}\sum_{i = 1}^n {I({\lambda _i} \le x)},$
where ${I(\cdot})$ is the indicator function.  In \cite{wigner1955, wigner1958}, Wigner proved that the expected ESD of a real Wigner matrix tends to the so-called semicircular law with density function: 
$$f(x)=\frac{1}{2\pi\sigma^2}\sqrt{4\sigma^2-x^2},~~x\in[-2\sigma,2\sigma],$$
where $\sigma^2$ is the scale parameter. Then, Juh\'{a}sz \cite{juhasz1978spectrum} and F\"{u}redi and Koml\'{o}s \cite{Furedi1981} firstly studied the weak limit of the extreme eigenvalues for a Wigner matrix.
 In 1988, Bai and Yin \cite{BaiYin1993} got the necessary and sufficient conditions for the strong convergence of the extreme eigenvalues.
And in \cite{tracy1994level}, Tracy and Widom derived the limiting distribution of the largest eigenvalue of \emph{Gaussian Orthogonal Ensemble (GOE)},
\emph{Gaussian Unitary Ensemble (GUE)}, and \emph{Gaussian Symplectic Ensemble (GSE)}. Much more new results can be found in \cite{o2013universality,tao2011wigner,Wang2012,PhysRevE.77.041108,Dumitriu2008} and references therein.
Another important problem is the convergence rate of the ESD of LDRM. The main method to establish such rates is the use of Stieltjes transform. In 1993, Bai \cite{Bai1993a} established an inequality and got the convergence rate of the expected spectral distribution of a Wigner matrix. For later results, we refer to Bai \cite{Bai199795,Bai1999,Bai2002} and F. G$\ddot{\rm o}$tze \cite{gotze2003rate,gotze2003}.

In \cite{yinbai2013}, the semicircular law for quaternion self-dual Hermitian matrices was proved. In \cite{yinbai2013e}, the asymptotic properties of the extreme eigenvalues for random quaternion self-dual Hermitian matrices was investigated. In this paper, we shall use this two papers' result and Bai inequality to establish a convergence rate of the expected spectral distribution of the quaternion self-dual Hermitian matrices. Our main theorems can be described as following:

\begin{thm}\label{mth1}
Suppose that $\mb S_n:=\frac{1}{{\sqrt n }}{\mb Y_n}= \frac{1}{{\sqrt n }} \({y_{jk}}\)_{n\times n}$, where $y_{kj}^*=y_{jk}=a_{jk}\mb I_2+b_{jk} \mb i_1+c_{jk} \mb i_2+d_{jk} \mb i_3$, is a quaternion self-dual Hermitian matrix whose entries above and on the diagonal are independent and satisfy: \begin{itemize}
  \item[(i)] ${\rm E} y_{jk}=\left(
                                     \begin{array}{cc}
                                       0 & 0 \\
                                       0 & 0 \\
                                     \end{array}
                                   \right)
  ,\mbox{ for all }1\le j\le k\le n.$
  \item[(ii)] ${\rm E} \|y_{jj}\|^2=\sigma^2\leq M<\infty, ~{\rm E} \|y_{jk}\|^2=1, \mbox{ for all }1\le j< k\le n.$
  \item[(iii)] $\sup_{1\leq j<k\leq n}\E\|y_{jk}\|^6,\E\|y_{jj}\|^3\leq M <\infty.$
\end{itemize}
Let $F$ and $F_n^s$ denote the semicircular law and the ESD of $\mb S_n$, respectively. Then we have as $n\to\infty$, for any $\eta>0$
\begin{align}
\begin{cases}
\|\E F_n^s-F\|_d=O(n^{-1/2}),\\
\|F_n^s-F\|_d=O_p(n^{-2/5}),\\
\|F_n^s-F\|_d=O_{a.s.}(n^{-2/5+\eta}),
\end{cases}
\end{align}
where $\|F\|_d:=\sup \limits_{x} |F(x)|$.
\end{thm}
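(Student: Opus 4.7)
The plan is to follow the Stieltjes transform method combined with Bai's inequality, which converts uniform bounds on $|s_F-s_G|$ into Kolmogorov-distance estimates for $F-G$. Let $s_n(z):=\frac{1}{2n}\tr(\mb S_n-z\mb I_{2n})^{-1}$ and let $s(z)$ denote the Stieltjes transform of the semicircular law. Because every eigenvalue of $\mb S_n$ has even multiplicity, $s_n$ is also the Stieltjes transform of $F_n^s$. Before entering the main estimate I would truncate and recenter: replacing $y_{jk}$ by $\tilde y_{jk}=y_{jk}\mathbf{1}_{\|y_{jk}\|\le\delta_n n^{1/4}}-\E(\cdots)$ with $\delta_n\downarrow 0$ slowly, and invoking condition (iii), a rank inequality, and the Hoffman--Wielandt inequality, shows that this modification changes each of the three target norms by $o(n^{-1/2})$, so the entries may be assumed bounded.

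The heart of the proof is a self-consistent equation for $\E s_n(z)$. Partitioning $(\mb S_n-z\mb I)^{-1}$ into $n\times n$ blocks of $2\times 2$ quaternion entries and applying the Schur complement yields
$$[(\mb S_n-z\mb I)^{-1}]_{kk}=-\bigl[z\mb I_2-y_{kk}/\sqrt n+\tfrac{1}{n}\mathbf{y}_k^*(\mb S_n^{(k)}-z\mb I)^{-1}\mathbf{y}_k\bigr]^{-1},$$
where $\mathbf{y}_k$ is the $k$-th quaternion column with $y_{kk}$ removed and $\mb S_n^{(k)}$ the corresponding principal minor. Using the independence of $\mathbf{y}_k$ from $\mb S_n^{(k)}$ together with the quaternion identity $yy^*=\|y\|^2\mb I_2$, one computes $\E\bigl[\tfrac{1}{n}\mathbf{y}_k^*(\mb S_n^{(k)}-z\mb I)^{-1}\mathbf{y}_k\bigr]$ in terms of $\tr(\mb S_n^{(k)}-z\mb I)^{-1}$, so averaging over $k$ and taking traces produces a perturbed version of the semicircular equation of \cite{yinbai2013}. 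Solving on a suitable horizontal line and feeding the resulting bound on $|\E s_n(z)-s(z)|$ into Bai's inequality with $v_n=n^{-1/2}$, together with the extreme-eigenvalue bound of \cite{yinbai2013e} to control the spectral support, yields the first rate $\|\E F_n^s-F\|_d=O(n^{-1/2})$.

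For the in-probability and almost-sure rates, write $\xi_n(z):=s_n(z)-\E s_n(z)=\sum_{k=1}^n(\E_k-\E_{k-1})[\tfrac{1}{2n}\tr(\mb S_n-z\mb I)^{-1}]$, where $\E_k$ conditions on the first $k$ quaternion columns. Each martingale difference is controlled via the rank-one interlacing identity, and Burkholder's inequality yields high-order moment bounds of the form $\E|\xi_n(z)|^p=O((nv^2)^{-p/2})$. A second application of Bai's inequality, following the arguments of Bai \cite{Bai2002} with $v=v_n$ chosen optimally to balance bias, fluctuation and the boundary contribution coming from the semicircular density, produces $\|F_n^s-F\|_d=O_p(n^{-2/5})$; Markov's inequality applied with a sufficiently large $p$ combined with Borel--Cantelli then promotes this to $O_{a.s.}(n^{-2/5+\eta})$. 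The main obstacle will be the accurate bookkeeping of the $2\times 2$ quaternion block structure throughout the resolvent identities: one must verify that the off-diagonal entries of $[(\mb S_n-z\mb I)^{-1}]_{kk}$ vanish in expectation (via the relations among $\mathbf{i}_1,\mathbf{i}_2,\mathbf{i}_3$ together with self-duality) and are small with high probability, before the scalar self-consistent equation becomes legitimate; once this matrix-valued concentration is in hand, the remainder of the argument parallels the classical Wigner case with the variance parameter adjusted for the symplectic symmetry.
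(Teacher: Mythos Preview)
Your outline captures the architecture correctly, but the fluctuation estimate you state is too weak to reach the rate $n^{-2/5}$. The interlacing bound $|\gamma_k|\le C(nv)^{-1}$ together with Burkholder gives exactly your $\E|\xi_n(z)|^p=O\bigl((nv^2)^{-p/2}\bigr)$; feeding this into Bai's inequality forces $(nv^2)^{-1/2}\lesssim v$, i.e.\ $nv^4\gtrsim 1$, so the best you can extract is $O_p(n^{-1/4})$ and $O_{a.s.}(n^{-1/4+\eta})$. The paper obtains the sharper bound
\[
\E\lj s_n(z)-\E s_n(z)\rj^{2l}\le Cn^{-2l}v^{-4l}(v+\Delta)^l
\]
(Lemma \ref{8.7}), which at $v=n^{-2/5}$ and $\Delta=O(n^{-1/2})$ gives $\E|s_n-\E s_n|^2\le Cn^{-4/5}$, precisely what is needed. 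This improvement does not come from interlacing alone: one must expand each martingale difference further as
\[
\gamma_k=-n^{-2}t_n(z)\,\E_{k-1}\bigl(\tr\mb Q_k^*\mb P_k\mb Q_k-\tr\mb P_k\bigr)+(2n)^{-1}(\E_{k-1}-\E_k)t_n(z)\theta_k\tr\epsilon_k,
\]
control the quadratic-form fluctuations via a quaternion version of the trace lemma (Lemma \ref{B.26}), and use the key estimate $\E\tr\lj\mb D_k\rj^2\le Cnv^{-2}(v+\Delta)$, which bootstraps on the expected rate $\Delta=O(n^{-1/2})$ already established. Without this refinement the $n^{-2/5}$ rates are out of reach.

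A secondary point: the $2\times2$ diagonal blocks of $(\mb S_n-z\mb I)^{-1}$ are \emph{exactly} scalar matrices, not merely scalar in expectation. This is the Type-I/Type-II structural fact from \cite{yinbai2013} (see Corollary \ref{form}): the quaternion self-dual symmetry forces the off-diagonal entries of each diagonal block of the resolvent to vanish identically. Recognizing this at the outset removes the ``matrix-valued concentration'' obstacle you flag and makes the scalar self-consistent equation immediate.
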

\begin{remark}
  We note that the entries of $\mb S_n$ may depend on $n$, but the index $n$ is suppressed for brevity.
\end{remark}

The proof of Theorem \ref{mth1} basically depends on the proof of the following theorem and an estimate of the difference between ESD's of $\mb S_n$ and $\mb W_n$ defined in Theorem \ref{mth2}. 

\begin{thm}\label{mth2}
Under the conditions of Theorem \ref{mth1}, let $\sigma_{jk}^2={\rm var}\big(y_{jk}I(\|y_{jk}\|\leq n^{1/4})\big)$ and ${\bf W}_n= \frac{1}{{\sqrt n }} \({x_{jk}}\)_{n\times n}$, where
\begin{align}
x_{jk}=
\begin{cases}
\sigma_{jk}^{-1}\big(y_{jk}I(\|y_{jk}\|\leq n^{1/4})-\E y_{jk}I(\|y_{jk}\|\leq n^{1/4})\big) &\mbox {$j\neq k$},\\
\sigma\sigma_{jj}^{-1}\big(y_{jj}I(\|y_{jj}\|\leq n^{1/4})-\E y_{jj}I(\|y_{jj}\|\leq n^{1/4})\big) &\mbox{$j=k$}.
\end{cases}
\end{align}
Let $F$ and $F_n^w$ denote the semicircular law and the ESD of $\mb W_n$, respectively. Then we have as $n\to\infty$, for any $\eta>0$
\begin{align}
\begin{cases}
\|\E F_n^w-F\|_d=O(n^{-1/2}),\\
\|F_n^w-F\|_d=O_p(n^{-2/5}),\\
\|F_n^w-F\|_d=O_{a.s.}(n^{-2/5+\eta}).
\end{cases}
\end{align}
\end{thm}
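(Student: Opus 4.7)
The plan is to follow the Stieltjes transform approach developed by Bai \cite{Bai1993a,Bai199795}, adapted to the quaternion self-dual Hermitian setting using the $2n\times 2n$ complex representation. Let $s_n(z)$ denote the Stieltjes transform of $F_n^w$ and $s(z)=(-z+\sqrt{z^2-4})/2$ that of the semicircular law $F$. Bai's inequality reduces all three bounds to estimates of the form
\begin{equation*}
\|F_n^w-F\|_d \le C\Bigl\{\int_{-A}^{A}|s_n(x+iv)-s(x+iv)|\,dx+v^{-1}\!\!\sup_x\!\!\int_{|y|\le 2va}\!\!|F(x+y)-F(x)|\,dy\Bigr\},
\end{equation*}
for suitable $A,a$. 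The density of $F$ is bounded, so the second term is $O(v)$; thus each of the three rates follows by choosing $v$ appropriately ($v\asymp n^{-1/2}$ for the expected bound, and $v\asymp n^{-2/5}$ for the $O_p$ and almost sure bounds) and controlling the integral of $|s_n-s|$ or $|\E s_n-s|$ respectively.

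Next I would expand the resolvent. Writing $\mathbf{R}(z)=(\mathbf{W}_n-z\mathbf{I})^{-1}$ and using the Schur complement for the $k$-th $2\times 2$ diagonal block,
\begin{equation*}
\mathbf{R}(z)_{kk}=\bigl[-z\mathbf{I}_2+n^{-1/2}x_{kk}-n^{-1}\boldsymbol{\alpha}_k^{*}\mathbf{R}_k(z)\boldsymbol{\alpha}_k\bigr]^{-1},
\end{equation*}
where $\boldsymbol{\alpha}_k$ is the $k$-th column of $\mathbf{Y}_n$ with the diagonal entry removed and $\mathbf{R}_k$ is the resolvent of the minor. Summing and exploiting the self-dual block structure (each scalar eigenvalue has even multiplicity, so the normalized trace behaves like a scalar), one obtains
\begin{equation*}
s_n(z)=\frac{1}{n}\sum_{k=1}^{n}\frac{1}{-z-s_n(z)+\delta_k(z)},
\end{equation*}
where $\delta_k$ collects three errors: (a) $n^{-1/2}x_{kk}$; (b) the quadratic-form fluctuation $\boldsymbol{\alpha}_k^{*}\mathbf{R}_k\boldsymbol{\alpha}_k - \frac{1}{n}\mathrm{tr}\,\mathbf{R}_k$; and (c) the rank-one difference $\frac{1}{n}\mathrm{tr}\,\mathbf{R}_k-s_n$. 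Under the truncation $\|x_{jk}\|\le n^{1/4}$ and finite sixth moment, standard quaternion-adapted quadratic-form inequalities (as in \cite{yinbai2013}) give $\E|\delta_k(z)|^2 = O((nv)^{-1})$ and higher moment bounds $\E|\delta_k|^p=O((nv)^{-p/2})$ for $p\le 6$. Comparing the above self-consistent equation with $s(z)=-1/(z+s(z))$ and applying stability of the semicircle fixed point on $\{v\ge v_0\}$, I get $|\E s_n(z)-s(z)|=O((nv^2)^{-1})$, which after integration yields the $O(n^{-1/2})$ rate for $\|\E F_n^w-F\|_d$, aided by a boundary estimate near $\pm 2$ using the already-known extreme eigenvalue bounds from \cite{yinbai2013e}.

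For the stochastic rates I would use a martingale decomposition
\begin{equation*}
s_n(z)-\E s_n(z)=\sum_{k=1}^{n}(\E_k-\E_{k-1})\,\mathrm{tr}\bigl(\mathbf{R}(z)-\mathbf{R}^{(k)}(z)\bigr)/n,
\end{equation*}
where $\mathbf{R}^{(k)}$ is the resolvent obtained from $\mathbf{W}_n$ with the $k$-th quaternion row/column removed. Each difference is rank two (as a quaternion rank-one perturbation) with norm $O((nv)^{-1})$; Burkholder's inequality together with the sixth-moment condition provides $\E|s_n(z)-\E s_n(z)|^{2p}=O\!\bigl(n^{-p}v^{-2p}\bigr)$ for $p\le 3$. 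Markov's inequality then delivers the $O_p(n^{-2/5})$ rate with $v\asymp n^{-2/5}$, and Borel--Cantelli (applied on a net in $z$ of cardinality polynomial in $n$) upgrades this to the almost sure rate $O_{a.s.}(n^{-2/5+\eta})$.

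The main obstacle I anticipate is rigorously handling the quaternion block structure in the quadratic form estimate and in the martingale increments: because $\boldsymbol{\alpha}_k^{*}\mathbf{R}_k\boldsymbol{\alpha}_k$ is itself a $2\times 2$ matrix whose entries mix the four real components $a_{jk},b_{jk},c_{jk},d_{jk}$, the off-diagonal cross-moments must be tracked carefully to confirm that the effective self-consistent equation is indeed that of the scalar semicircle and not a $2\times 2$ matrix Dyson equation. The identities $\mathbf{i}_1^2=\mathbf{i}_2^2=\mathbf{i}_3^2=-\mathbf{I}_2$ and the self-dual condition $y_{jk}=\overline{y}_{kj}$ are what ultimately make this cancellation work, and verifying this precisely with the correct constants will be the most delicate computation.
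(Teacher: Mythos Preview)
Your outline follows the same Stieltjes-transform/Bai-inequality scheme as the paper, but the quantitative estimates you write down are too weak to reach the stated rates; as stated, your argument would reproduce only the older Bai~\cite{Bai1993a} rates $O(n^{-1/3})$ for the expectation and $O_p(n^{-1/4})$ for the random ESD.

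\medskip
\textbf{Expected rate.} From $|\E s_n(z)-s(z)|=O((nv^2)^{-1})$ and Bai's inequality you need $\int_{-A}^A|\E s_n-s|\,du\le Cv$, which forces $v\ge n^{-1/3}$, not $n^{-1/2}$. The paper does not bound $\E s_n-s$ pointwise; instead it writes $\E s_n(z)=-\tfrac12\bigl(z-\delta_n(z)-\sqrt{(z+\delta_n(z))^2-4}\,\bigr)$ and shows $|\delta_n(z)|<v$ for every $v\ge Cn^{-1/2}$ by a \emph{continuity/bootstrap} argument: at a hypothetical $z_0$ with $|\delta_n(z_0)|=v_0$ one gets $|t_n(z_0)|=|z_0+\E s_n(z_0)|^{-1}\le 1$, and then a fourth-order expansion $\delta_n=t_n\sum_k\E\tr(\epsilon_k\xi_k)$ together with the estimates $|\E\tr\epsilon_k|\le Cn^{-1}v^{-1}$, $\E|\tr\epsilon_k^j|\le Cn^{-1}v^{-2}(v+\Delta)^{j-1}$ for $j=2,3,4$ forces $|\delta_n(z_0)|<v_0$, a contradiction. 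A separate lemma (Lemma~\ref{delta}) then converts $|\delta_n|\le v$ directly into $\Delta\le C_\delta v$. Your ``stability of the fixed point'' step misses both the bound $|t_n|\le 1$ (which is only available on the boundary of the bootstrap region) and the higher-order expansion needed to close the argument at $v\asymp n^{-1/2}$.

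\medskip
\textbf{Stochastic rates.} The crude martingale bound you quote, $\E|s_n-\E s_n|^{2p}=O(n^{-p}v^{-2p})$, comes from $|\gamma_k|\le C(nv)^{-1}$ and yields at $v=n^{-2/5}$ a probability estimate that actually diverges. The paper obtains the much sharper
\[
\E|s_n(z)-\E s_n(z)|^{2l}\le Cn^{-2l}v^{-4l}(v+\Delta)^l,
\]
by writing $\gamma_k$ not as a rank-two resolvent perturbation but via $\xi_k=t_n\mathbf I_2+t_n\xi_k\epsilon_k$, and then invoking the Ward-type identity $\tr|\mathbf D_k|^2=v^{-1}\Im\tr\mathbf D_k$ together with $\E\Im s_n\le Cv^{-1}(v+\Delta)$. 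One power of $v^{-1}$ in your bound is thereby traded for $v^{-1}(v+\Delta)$; combined with the already-proved $\Delta=O(n^{-1/2})\le v$, this is exactly what produces $n^{-2/5}$ and, for $l>(5\eta)^{-1}$, a summable tail giving the almost-sure rate. Your proposal does not contain this refinement, and without it the $O_p(n^{-2/5})$ and $O_{a.s.}(n^{-2/5+\eta})$ conclusions do not follow.

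\medskip
The quaternion issue you flag at the end is real but is fully resolved in the paper: the resolvents $\mathbf D,\mathbf D_k$ are Type-I matrices, so $\epsilon_k$ and $\xi_k$ are scalar multiples of $\mathbf I_2$ (Corollary~\ref{form}), and the self-consistent equation is genuinely scalar. That part of your plan is fine; the gaps are the two quantitative refinements above.
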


\section{Main Tools and some lemmas}
Before proving our theorems, we present some lemmas which are needed in the proofs of the main theorems.

To begin with, we introduce some notation.
For any function of bounded variation $G$ on the real line, its Stieltjes transform is defined by
$$s_G(z)=\int\frac{1}{y-z}dG(y),~~z\in\mathbb{C}^{+}\equiv\{z\in\mathbb{C}:\Im z>0\}.$$
Let $\mb Q_k=(x'_{1k}, \ldots, x'_{(k-1)k}, x'_{(k+1)k}, \ldots, x'_{nk})'=(\varpi'_k, \varrho'_k)'_{(2n-2)\times 2}$ denote the $k$-th 
quaternion column of $\mb W_n$ with $k$-th quaternion elements removed. Let $\mb W_n(k)$ be the matrix obtained from $\mb W_n$ with the $k$-th quaternions column and
the $k$-th quaternions row removed. Let $s(z)$ be the Stieltjes transform of semicircular law and $s_n(z)$ be the Stieltjes transform of $F_n^w$. Moreover, write
$$\mb D=\(\mb W_n-z\mb I_{2n}\)^{-1}, \mb D_k=\(\mb W_n(k)-z\mb I_{2n-2}\)^{-1},{\mb P}_k=\(\mb W_n(k)-z\mb I_{2n-2}\)^{-2},$$
$$\Delta=\|\E F_n^w-F\|_d, \quad t_n(z)=\(z+\E s_n(z)\)^{-1},
$$
$$\epsilon_k=n^{-1/2}x_{kk}-n^{-1}\mb Q_k^*\mb D_k\mb Q_k+\E s_n(z)\cdot\mb I_2, \xi_k(z)=\(\(z+\E s_n(z)\)\cdot\mb I_2-\epsilon_k\)^{-1}.$$
Last but not the least, we remind the reader that throughout this paper, a local constant C may take different value at different appearances.

\subsection{Some auxiliary lemmas: Part I}
\begin{lemma}[Theorem 2.2 in \cite{Bai1993a}]\label{bai93in}
Let $F$ be a distribution function and $G$ be a function of bounded variation satisfying $\int |F(x)-G(x)|\mathrm{d}x< \infty.$ Denote their Stieltjes transforms by $f(z)$ and $g(z)$, respectively, where $z=u+iv \in \mathbb{C^+}$. Then we have
\begin{align}
  \|F-G\|_d&:=\sup \limits_{x} |F(x)-G(x)| \\ \notag
  &\leq \frac{1}{\pi(1-\kappa)(2\gamma-1)} {\bigg [} \int_{-A}^{A}|f(z)-g(z)|\mathrm{d}u \\ \notag
  &+2\pi v^{-1} \int_{|x|>B}|F(x)-G(x)|\mathrm{d}x \\ \notag
  &+v^{-1} \sup \limits_{x} \int_{|y|\leq 2va}|G(x+y)-G(x)|\mathrm{d}y \bigg],
\end{align}
where $a$ and $\gamma$ are constants related to each other by $\gamma=\frac{1}{\pi}\int_{|u|<a}\frac{1}{u^2+1}\mathrm{d}u>\frac{1}{2},$ and $A$ and $B$ are positive constants such that $A>B$ and $\kappa=\frac{4B}{\pi(A-B)(2\gamma-1)}<1.$
\end{lemma}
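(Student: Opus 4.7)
The plan is to establish the inequality via Stieltjes--Perron inversion, using a Poisson integral representation of $H := F - G$ combined with the monotonicity of $F$. Set $\Delta = \|F-G\|_d$. The hypothesis $\int|H|\,dx < \infty$ together with $F$ being a distribution function and $G$ of bounded variation forces $H(\pm\infty) = 0$, so integration by parts against $H$ is valid. I would first pick a point $x_0$ at which $|H|$ approaches $\Delta$; by swapping $F \leftrightarrow G$ if needed, assume $H(x_0) \to \Delta > 0$. Monotonicity of $F$ supplies the one-sided bound
$$H(x_0+y) \geq \Delta - \bigl(G(x_0+y) - G(x_0)\bigr) \quad \text{for } y \geq 0,$$
while $H \geq -\Delta$ holds globally.

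The central step is to evaluate the Poisson integral $(P_v * H)(x_0 + va)$ with $P_v(y) = v/[\pi(y^2+v^2)]$. The Poisson kernel centered at $x_0+va$ has mass $\gamma$ on $[x_0, x_0+2va]$ (this is exactly the definition of $\gamma$ after change of variables) and mass $1-\gamma$ on the complement. Substituting the monotonicity bound on the former interval and the trivial bound $H \geq -\Delta$ on the latter gives
$$(P_v*H)(x_0+va) \geq (2\gamma-1)\Delta - v^{-1}\sup_x\int_{|y|\leq 2va}|G(x+y)-G(x)|\,dy,$$
the $(2\gamma-1)$ factor being the difference between mass $\gamma$ inside and $1-\gamma$ outside. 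Simultaneously, the Perron identity $\pi (P_v*H)(x) = \int_{-\infty}^x \Im(f-g)(u+iv)\,du$ (obtained from $\Im f(u+iv) = \int v/[(t-u)^2+v^2]\,dF(t)$ and Fubini) lets me bound $(P_v*H)(x_0+va)$ from above by splitting the integration range at $u=-A$: the interior contributes at most $\int_{-A}^A|f-g|\,du$, and the tail, after one more integration by parts, reduces to $|\int H(t)\cdot v/[(t+A)^2+v^2]\,dt|$.

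Estimating this tail integral: over $|t|>B$, using $v/[(t+A)^2+v^2]\leq 1/v$, we bound by $v^{-1}\int_{|x|>B}|H|\,dx$; over $|t|\leq B$, using the sharper bound $v/[(t+A)^2+v^2]\leq v/(A-B)^2$ combined with $|H|\leq \Delta$, we obtain a term proportional to $B\Delta/(A-B)$. Symmetrizing the argument across both tails (splitting $(P_v*H)$ also via $\pi(P_v*H)(x) = -\int_x^\infty \Im(f-g)\,du$) yields the exact constants $\kappa = 4B/[\pi(A-B)(2\gamma-1)]$ and the prefactor $2\pi$ in the stated tail term. Assembling, one obtains
$$\pi(2\gamma-1)\Delta \leq \pi\kappa(2\gamma-1)\Delta + \int_{-A}^A|f-g|\,du + 2\pi v^{-1}\int_{|x|>B}|H|\,dx + v^{-1}\sup_x\int_{|y|\leq 2va}|G(x+y)-G(x)|\,dy,$$
and transposing the $\kappa$ term to the LHS and dividing by $\pi(1-\kappa)(2\gamma-1)$ produces the claim. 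The main obstacle is the asymmetry of the monotonicity bound, which only controls $H$ on the side $y \geq 0$ of $x_0$ and forces the specific coefficient $(2\gamma-1)$ rather than a cleaner $\gamma$; the secondary difficulty is the careful bookkeeping of constants needed to match $\kappa$ and the tail prefactor $2\pi$ exactly.
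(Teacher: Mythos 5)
The paper does not prove this lemma at all: it is imported verbatim as Theorem~2.2 of \cite{Bai1993a}, so there is no in-paper argument to compare against. Your reconstruction is essentially Bai's original proof — Stieltjes--Perron inversion written as the Poisson convolution $\pi(P_v*H)(w)=\int_{-\infty}^{w}\Im(f-g)\,du$, monotonicity of $F$ to force kernel mass $\gamma$ onto the favorable side of a near-maximizer, the resulting $(2\gamma-1)\Delta$ lower bound, and truncation of the $u$-integral to $[-A,A]$ with the leftover absorbed into $\kappa\Delta$ plus the tail term — and the constants check out (with slack, which is all the stated, non-sharp, inequality requires). Two points need tightening. First, ``swapping $F\leftrightarrow G$'' is not legitimate: $G$ is only of bounded variation, so after the swap the monotonicity bound disappears. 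The standard fix is the mirror argument: if $F(x_0)-G(x_0)\to-\Delta$, use $F(x_0+y)\le F(x_0)$ for $y\le 0$ and evaluate the Poisson integral at $x_0-va$ instead, which gives the same $(2\gamma-1)\Delta$ bound on $\lj (P_v*H)(x_0-va)\rj$; this is exactly the asymmetry you flag at the end, so make it explicit. Second, splitting $\int_{-\infty}^{x_0+va}\Im(f-g)\,du$ at $u=-A$ only bounds the middle piece by $\int_{-A}^{A}|f-g|\,du$ when $x_0+va\le A$; since nothing places the near-maximizer inside $[-A,A]$, you should instead control $\int_{|u|>A}|f(u+iv)-g(u+iv)|\,du$ directly, via integration by parts in the Stieltjes transform to get $|f-g|\le\int |H(x)|\,[(x-u)^2+v^2]^{-1}dx$, then integrating over $|u|>A$ and splitting at $|x|=B$ exactly as you do for your tail integral. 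With those two repairs the argument is complete and matches the classical one.
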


\begin{definition} A matrix is called Type-\uppercase\expandafter{\romannumeral1} matrix if it has the following structure:
\[\left( {\begin{array}{*{20}{c}}
{{t_1}}&0&{{a_{12}}}&{{b_{12}}}& \cdots &{{a_{1n}}}&{{b_{1n}}}\\
0&{{t_1}}&{{c_{12}}}&{{d_{12}}}& \cdots &{{c_{1n}}}&{{d_{1n}}}\\
{{d_{12}}}&{ - {b_{12}}}&{{t_2}}&0& \cdots &{{a_{2n}}}&{{b_{2n}}}\\
{ - {c_{12}}}&{{a_{12}}}&0&{{t_2}}& \cdots &{{c_{2n}}}&{{d_{2n}}}\\
 \vdots & \vdots & \vdots & \vdots & \ddots & \vdots & \vdots \\
{{d_{1n}}}&{ - {b_{1n}}}&{{d_{2n}}}&{ - {b_{2n}}}& \cdots &{{t_n}}&0\\
{ - {c_{1n}}}&{{a_{1n}}}&{ - {c_{2n}}}&{{a_{2n}}}& \ldots &0&{{t_n}}
\end{array}} \right).\]
Here all the  entries are  complex numbers.
\end{definition}
\begin{definition}A matrix is called Type-\uppercase\expandafter{\romannumeral2} matrix if it has the following structure:
\begin{footnotesize}
\[\left( {\begin{array}{*{20}{c}}
{{t_1}}&0&{{a_{12}} + {c_{12}}  i}&{{b_{12}} + {d_{12}}  i}& \cdots &{{a_{1n}} + {c_{1n}}  i}&{{b_{1n}} + {d_{1n}}  i}\\
0&{{t_1}}&{ - {{\bar b}_{12}} - {{\bar d}_{12}}  i}&{{{\bar a}_{12}} + {{\bar c}_{12}}  i}& \cdots &{ - {{\bar b}_{1n}} - {{\bar d}_{1n}}  i}&{{{\bar a}_{1n}} + {{\bar c}_{1n}}  i}\\
{{{\bar a}_{12}} + {{\bar c}_{12}}  i}&{ - {b_{12}} - {d_{12}}  i}&{{t_2}}&0& \cdots &{{a_{2n}} + {c_{2n}}  i}&{{b_{2n}} + {d_{2n}}  i}\\
{{{\bar b}_{12}} + {{\bar d}_{12}}  i}&{{a_{12}} + {c_{12}}  i}&0&{{t_2}}& \cdots &{ - {{\bar b}_{2n}} - {{\bar d}_{2n}}  i}&{{{\bar a}_{2n}} + {{\bar c}_{2n}}  i}\\
 \vdots & \vdots & \vdots & \vdots & \ddots & \vdots & \vdots \\
{{{\bar a}_{1n}} + {{\bar c}_{1n}}  i}&{ - {b_{1n}} - {d_{1n}}  i}&{{{\bar a}_{2n}} + {{\bar c}_{2n}}  i}&{ - {b_{2n}} - {d_{2n}} i}& \cdots &{{t_n}}&0\\
{{{\bar b}_{1n}} + {{\bar d}_{1n}}  i}&{{a_{1n}} + {c_{1n}}  i}&{{{\bar b}_{2n}} + {{\bar d}_{2n}}  i}&{{a_{2n}} + {c_{2n}}  i}& \ldots &0&{{t_n}}
\end{array}} \right).\]
\end{footnotesize}
Here $i=\sqrt{-1}$ denotes the usual imaginary unit and all the other variables are  complex numbers.
\end{definition}

\begin{lemma}[Theorem 1.1 in \cite{yinbai2013}]\label{yin2013}
For all $n\geq1$, if a complex  matrix  $\Omega_n$ is  invertible and of Type-$I\!I$, then $\Omega_n^{-1}$ is a Type-\uppercase\expandafter{\romannumeral1} matrix.
\end{lemma}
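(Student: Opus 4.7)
The plan is to re-encode the Type-I condition as a single algebraic identity that is manifestly preserved under matrix inversion, and then verify that every Type-II matrix satisfies the same identity. Define $J_2 := \begin{pmatrix} 0 & 1 \\ -1 & 0\end{pmatrix}$ and the block-wise symplectic form $J := I_n \otimes J_2 \in \mathbb{C}^{2n \times 2n}$. A one-line $2\times 2$ computation yields
\[
J_2 \begin{pmatrix} a & b \\ c & d \end{pmatrix}^{T} J_2^{-1} \;=\; \begin{pmatrix} d & -b \\ -c & a \end{pmatrix} \;=\; \operatorname{adj}\begin{pmatrix} a & b \\ c & d\end{pmatrix}.
\]
Reading this identity blockwise, a $2n \times 2n$ complex matrix $\Omega$ is Type-I if and only if $\Omega = J\,\Omega^T J^{-1}$: for $j\neq k$ the identity asserts that the $(k,j)$ block is the adjugate of the $(j,k)$ block, which is precisely the prescribed Type-I relation, while for $j=k$ it forces $\Omega_{jj} = \operatorname{adj}(\Omega_{jj})$, which has the unique solution $\Omega_{jj} = t_j I_2$ for some scalar $t_j$.

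Next I would show that every Type-II matrix obeys the same identity $\Omega = J\,\Omega^T J^{-1}$. Each off-diagonal block of a Type-II matrix has the form $M_{jk} = \begin{pmatrix} \alpha & \beta \\ -\bar\beta & \bar\alpha\end{pmatrix}$ with $\alpha = a_{jk}+c_{jk}i$, $\beta = b_{jk}+d_{jk}i$, and a direct computation gives
\[
\operatorname{adj}(M_{jk}) = \begin{pmatrix} \bar\alpha & -\beta \\ \bar\beta & \alpha\end{pmatrix},
\]
which matches exactly the prescribed $(k,j)$ block in the Type-II definition; the diagonal blocks $t_j I_2$ are trivially self-adjugate. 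Hence every Type-II matrix satisfies the symplectic duality $\Omega = J\,\Omega^T J^{-1}$.

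The final step is then an immediate one-liner. Taking inverses in $\Omega = J\,\Omega^T\,J^{-1}$,
\[
\Omega^{-1} = J\,(\Omega^T)^{-1}\,J^{-1} = J\,(\Omega^{-1})^T\,J^{-1},
\]
so $\Omega^{-1}$ satisfies the characterizing identity from the first step and is therefore a Type-I matrix.

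The only real effort is the blockwise bookkeeping of the identity $J_2 M^T J_2^{-1} = \operatorname{adj}(M)$ and matching the Type-I and Type-II definitions to this symplectic duality; once the right invariant description is in place, closure under inversion is essentially automatic. The main conceptual obstacle is therefore recognizing that the apparently asymmetric block structure of Type-I is equivalent to a clean transpose-invariance under $J$-conjugation, rather than any computational difficulty in the inversion step itself.
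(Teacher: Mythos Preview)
Your argument is correct. The identity $J_2 M^T J_2^{-1}=\operatorname{adj}(M)$ is verified as you wrote it, and applying it blockwise shows that the condition $\Omega=J\Omega^T J^{-1}$ is exactly the Type-I structure (off-diagonally $\Omega_{kj}=\operatorname{adj}(\Omega_{jk})$, diagonally $\Omega_{jj}=\operatorname{adj}(\Omega_{jj})$ forces $\Omega_{jj}=t_j I_2$). Your check that every Type-II matrix satisfies the same relation is also right; indeed, with your notation the $(k,j)$ block equals $\operatorname{adj}(M_{jk})$, so Type-II matrices are in fact a subclass of Type-I. The inversion step $\Omega^{-1}=J(\Omega^{-1})^T J^{-1}$ then follows immediately.

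As for comparison: the present paper does not reproduce a proof of this lemma at all; it simply quotes it as Theorem~1.1 of \cite{yinbai2013}. So there is no in-paper argument to set yours against. Your approach is the standard and cleanest one: recognize that the Type-I condition is precisely invariance under the symplectic transpose $M\mapsto J M^T J^{-1}$, observe that Type-II matrices already enjoy this invariance, and use that invariance under the symplectic transpose is preserved by inversion. This is essentially the conceptual content behind the cited result; the only thing you might add for completeness is the remark that Type-II $\subset$ Type-I (which you implicitly establish), so the lemma is really the statement that Type-I is closed under inversion.
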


\begin{corollary}\label{form}
By Lemma \ref{yin2013}, if the conditions of Theorem \ref{mth2} hold, we have
\begin{enumerate}
  \item $\mb D$ and $\mb D_k$ are all Type-\uppercase\expandafter{\romannumeral1} matrices ,
  \item $\xi_k(z)$ and $\epsilon_k$ are all scalar matrices, where $$\epsilon_k=\left(
                                                                                 \begin{array}{cc}
                                                                                   \E s_n(z)-n^{-1}\varpi_k^*{\mb D}_k\varpi_k & 0 \\
                                                                                   0 & \E s_n(z)-n^{-1}\varrho_k^*{\mb D}_k\varrho_k \\
                                                                                 \end{array}
                                                                               \right)+n^{-1/2}x_{kk},
  $$
\end{enumerate}
\end{corollary}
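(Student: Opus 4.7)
My plan is to handle the two claims in order: the first follows directly from Lemma~\ref{yin2013}, while the second combines the Type-I structure of $\mathbf{D}_k$ with a symplectic pairing of the two columns of $\mathbf{Q}_k$. First I would verify that $\mathbf{W}_n-z\mathbf{I}_{2n}$ is itself Type-II: the diagonal $2\times 2$ blocks equal $(a_{kk}/\sqrt n-z)\mathbf{I}_2$, still of the required scalar form $t_k\mathbf{I}_2$, and each off-diagonal $(j,k)$-block equals the complex quaternion representation $\begin{pmatrix}\alpha_{jk}&\beta_{jk}\\-\bar\beta_{jk}&\bar\alpha_{jk}\end{pmatrix}/\sqrt n$, which is exactly the Type-II off-diagonal template. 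Lemma~\ref{yin2013} then immediately yields that $\mathbf{D}$ is Type-I, and the same reasoning applied to the principal submatrix $\mathbf{W}_n(k)$ (again quaternion self-dual Hermitian) gives that $\mathbf{D}_k$ is Type-I.

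For Part (2), the term $n^{-1/2}x_{kk}$ is manifestly a real scalar matrix because $y_{kk}=a_{kk}\mathbf{I}_2$ and truncation plus centering preserve this form; so the real content is that the $2\times 2$ matrix $\mathbf{Q}_k^{*}\mathbf{D}_k\mathbf{Q}_k$ is a scalar multiple of $\mathbf{I}_2$. The key observation I would use is that, reading off the quaternion entries $x_{jk}$ for $j\neq k$, the two columns of $\mathbf{Q}_k$ satisfy the symplectic pairing $\varrho_k=\mathcal{J}\bar\varpi_k$, where $\mathcal{J}$ is the $(2n-2)\times(2n-2)$ real block-diagonal matrix whose $2\times 2$ blocks equal $\begin{pmatrix}0&-1\\1&0\end{pmatrix}$. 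A direct block-by-block computation then confirms the companion identity $\mathcal{J}^{T}M\mathcal{J}=M^{T}$ for every Type-I matrix $M$; since $\mathcal{J}^{2}=-\mathbf{I}$, this is equivalent to saying that $\mathbf{D}_k\mathcal{J}$ is complex anti-symmetric.

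Combining these two facts will yield the scalar structure at once. The two diagonal entries coincide, because
\[\varrho_k^{*}\mathbf{D}_k\varrho_k=\varpi_k^{T}\mathcal{J}^{T}\mathbf{D}_k\mathcal{J}\bar\varpi_k=\varpi_k^{T}\mathbf{D}_k^{T}\bar\varpi_k=\varpi_k^{*}\mathbf{D}_k\varpi_k,\]
where the last equality uses that a scalar equals its own transpose; while the off-diagonal entry $\varpi_k^{*}\mathbf{D}_k\varrho_k=\bar\varpi_k^{T}(\mathbf{D}_k\mathcal{J})\bar\varpi_k$ vanishes as a quadratic form of an anti-symmetric matrix. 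This pins down $\epsilon_k$ in the displayed scalar form, and $\xi_k(z)$ is then scalar as the inverse of the scalar matrix $(z+\E s_n(z))\mathbf{I}_2-\epsilon_k$. The only real obstacle I anticipate is verifying the block identity $\mathcal{J}^{T}M\mathcal{J}=M^{T}$ for Type-I $M$: the diagonal blocks $t_j\mathbf{I}_2$ are trivial, but matching the Type-I off-diagonal pair $\begin{pmatrix}a&b\\c&d\end{pmatrix}$ at $(j,l)$ and $\begin{pmatrix}d&-b\\-c&a\end{pmatrix}$ at $(l,j)$ under conjugation by $\begin{pmatrix}0&-1\\1&0\end{pmatrix}$ is exactly where the specific (non Type-II) structure of $\mathbf{D}_k$ enters; everything else reduces to one-line manipulations.
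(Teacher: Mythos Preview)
Your argument is correct. The paper does not supply a separate proof of this corollary at all; it is stated as an immediate consequence of Lemma~\ref{yin2013} and left to the reader. Your write-up is therefore a fully fleshed-out verification of what the authors take for granted.

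The route you take---introducing the block-diagonal symplectic matrix $\mathcal{J}$, checking $\varrho_k=\mathcal{J}\bar\varpi_k$ from the quaternion form of the column entries, and then establishing $\mathcal{J}^{T}M\mathcal{J}=M^{T}$ (equivalently, that $M\mathcal{J}$ is anti-symmetric) for Type-I $M$---is exactly the structural reason behind the corollary and is the natural way to prove it. One small remark: you explicitly dispose of only one off-diagonal entry $\varpi_k^{*}\mathbf{D}_k\varrho_k$; the other entry $\varrho_k^{*}\mathbf{D}_k\varpi_k=-\varpi_k^{T}(\mathcal{J}\mathbf{D}_k)\varpi_k$ vanishes by the companion fact that $\mathcal{J}\mathbf{D}_k$ is also anti-symmetric, which follows from the same identity. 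With that addition your proof is complete and is essentially what the paper is implicitly invoking.
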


\begin{lemma}[Corollary A.41 in \cite{bai2010spectral}]\label{A.41}
Let $A$ and $B$ be two $n \times n$  normal matrices with their ESDs ${F^A}$ and ${F^B}$.  Then, $${L^3}({F^A}, {F^B}) \le \frac{1}{n}{\rm tr}[(A - B){(A - B)^ * }], $$
where $L(\cdot,\cdot)$ is the Levy distance between two distributions (See Remark A.39. in \cite{bai2010spectral}).
\end{lemma}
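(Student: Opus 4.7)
The plan is to combine two classical ingredients: a Hoffman-Wielandt type trace inequality that bounds $\sum_i|\lambda_i-\mu_i|^2$ (after a sorted pairing of eigenvalues) by $\tr[(A-B)(A-B)^*]$, and a deterministic estimate that converts such an $\ell^2$ control on sorted eigenvalues into a Levy-distance bound with the characteristic $1/3$ exponent.

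For the first ingredient, list the eigenvalues of the normal matrices $A$ and $B$ in increasing order as $\lambda_1\le\cdots\le\lambda_n$ and $\mu_1\le\cdots\le\mu_n$ (real, as in the applications of this paper). The Hoffman-Wielandt theorem for normal matrices asserts
$$\min_{\sigma}\sum_{i=1}^n |\lambda_i-\mu_{\sigma(i)}|^2 \le \tr[(A-B)(A-B)^*],$$
and for real spectra the sorted pairing $\sigma=\mathrm{id}$ attains the minimum by the rearrangement inequality, giving $S:=\sum_{i=1}^n|\lambda_i-\mu_i|^2 \le \tr[(A-B)(A-B)^*]$.

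For the second ingredient, fix $\varepsilon>0$ and set $N_\varepsilon := \#\{i: |\lambda_i-\mu_i|>\varepsilon\}$. For every $x\in\mathbb R$, any index $i$ with $\mu_i\le x$ and $|\lambda_i-\mu_i|\le\varepsilon$ also satisfies $\lambda_i\le x+\varepsilon$, so
$$F^B(x)-F^A(x+\varepsilon) \le N_\varepsilon/n,$$
and symmetrically $F^A(x-\varepsilon)-F^B(x)\le N_\varepsilon/n$. This yields $L(F^A,F^B)\le \max(\varepsilon,\, N_\varepsilon/n)$. Chebyshev's inequality gives $N_\varepsilon\le \varepsilon^{-2}S$, hence
$$L(F^A,F^B)\le \max\!\Big(\varepsilon,\ \frac{S}{n\varepsilon^{2}}\Big).$$
The choice $\varepsilon=(S/n)^{1/3}$ balances the two terms and produces $L^3(F^A,F^B)\le S/n$. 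Combined with the first step, this is precisely the claimed inequality.

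The only subtlety is the Hoffman-Wielandt step for genuinely normal (non-Hermitian) matrices, whose eigenvalues lie in $\mathbb C$ and where the optimal pairing need not be the sorted one; the statement of the lemma implicitly presumes that the two ESDs are compared through a common ordering of the real line, so one should either restrict to the Hermitian/self-dual case (which is all that is needed later in the paper) or note that the $\ell^2$-to-Levy step is indifferent to relabeling, so once the Hoffman-Wielandt minimum has been attained we can simply relabel the eigenvalues of $B$ before comparing the two empirical measures. Apart from this bookkeeping, every step is elementary.
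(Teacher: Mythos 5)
Your argument is correct and is essentially the standard proof of Corollary A.41 in \cite{bai2010spectral} (the paper itself imports this lemma without proof): the Hoffman--Wielandt inequality supplies the $\ell^2$ control on a suitable pairing of eigenvalues, and the counting/Chebyshev step with $\varepsilon=(S/n)^{1/3}$ converts it into the cubed Levy bound. Your closing remark is also the right way to handle the normal (non-Hermitian) case: the $\ell^2$-to-Levy step works for an arbitrary pairing (or, for genuinely complex spectra, for the two-dimensional Levy distance of Remark A.39), and in this paper only the Hermitian case is ever used.
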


\begin{remark}
In view of Lemma B.18 of \cite{bai2010spectral} and noticing that the semicircular law satisfies the Lipschitz condition, the metric $L(F_n^w,F)$ dominates the Kolmogorov distance $\|F_n^w-F\|_d$.
\end{remark}

\begin{lemma}[Theorem A.43 in \cite{bai2010spectral}]\label{A.43}
Let $\mb A$ and $\mb B$ be two $p \times n$ Hermitian matrices.  Then,  $$\left\| {{F^{\mb A}} - {F^{\mb B}}} \right\|_d \le \frac{1}{n}{\rm rank}({\mb A} - {\mb B}). $$
\end{lemma}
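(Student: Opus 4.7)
The plan is to prove the bound via the min--max (Courant--Fischer) characterization of the eigenvalues of a Hermitian matrix, exploiting the fact that $\mb A$ and $\mb B$ agree as Hermitian forms on the null space of the rank-$r$ perturbation $\mb C := \mb A - \mb B$, where $r := \mathrm{rank}(\mb A - \mb B)$. Reading the $p \times n$ in the statement as $n \times n$ (a Hermitian matrix is necessarily square), it suffices to establish $F^{\mb A}(x) - F^{\mb B}(x) \le r/n$ for every $x \in \mathbb R$, since the reverse inequality follows by exchanging the roles of $\mb A$ and $\mb B$.

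Fix $x \in \mathbb R$ and let $U_{\mb A}(x)$ denote the span of those eigenvectors of $\mb A$ whose associated eigenvalues are at most $x$, so that $\dim U_{\mb A}(x) = n F^{\mb A}(x)$. Let $V := \ker(\mb A - \mb B)$, which has dimension at least $n - r$. By the standard dimension inequality $\dim(U \cap V) \ge \dim U + \dim V - n$, the subspace $W := U_{\mb A}(x) \cap V$ satisfies $\dim W \ge n F^{\mb A}(x) - r$; we may assume this quantity is positive, for otherwise $F^{\mb A}(x) \le r/n$ and the desired bound is immediate. For every $v \in W$, membership in $V$ gives $(\mb A - \mb B)v = 0$ and hence $v^* \mb A v = v^* \mb B v$, while membership in $U_{\mb A}(x)$ combined with the spectral decomposition of $\mb A$ yields $v^* \mb A v \le x \|v\|^2$; together these give $v^* \mb B v \le x \|v\|^2$ on all of $W$.

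Set $k := n F^{\mb A}(x) - r$ and choose any $k$-dimensional subspace $W' \subseteq W$. Letting $\mu_1 \le \cdots \le \mu_n$ denote the eigenvalues of $\mb B$ in increasing order, the Courant--Fischer principle gives
\[
\mu_k \;=\; \min_{\substack{S \subseteq \mathbb C^n \\ \dim S = k}} \;
\max_{\substack{v \in S \\ v \ne 0}} \frac{v^* \mb B v}{v^* v}
\;\le\; \max_{\substack{v \in W' \\ v \ne 0}} \frac{v^* \mb B v}{v^* v} \;\le\; x.
\]
Consequently $\mb B$ has at least $k$ eigenvalues in $(-\infty, x]$, i.e.\ $n F^{\mb B}(x) \ge k = n F^{\mb A}(x) - r$. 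Rearranging, dividing by $n$, and taking the supremum over $x$ yields the stated inequality.

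The argument is short and essentially combinatorial, so there is no serious analytical obstacle; the only points that require care are the dimension count $\dim(U \cap V) \ge \dim U + \dim V - n$ and the correct reading of Courant--Fischer. A fully equivalent alternative would be to invoke Weyl's inequality (or, equivalently, iterated Cauchy interlacing for rank-one perturbations), which directly yields a shift-by-$r$ bound on the distribution functions; I nevertheless prefer the variational route because it handles positive and negative parts of the perturbation uniformly and avoids the bookkeeping of eigenvalue signs of $\mb C$.
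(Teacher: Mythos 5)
Your argument is correct. The paper itself gives no proof of this lemma --- it is imported verbatim as Theorem A.43 of Bai and Silverstein's book (where, incidentally, the matrices should indeed be read as $n\times n$; the ``$p\times n$'' is a slip carried over from the rectangular version of the rank inequality). Your route --- intersecting the spectral subspace $U_{\mb A}(x)$ with $\ker(\mb A-\mb B)$ and feeding the resulting subspace into Courant--Fischer --- is a clean, self-contained way to get the one-sided bound $F^{\mb A}(x)-F^{\mb B}(x)\le r/n$, and the symmetry step and the degenerate case $nF^{\mb A}(x)\le r$ are both handled properly. It is essentially equivalent to the textbook proof, which obtains the same shift-by-$r$ relation $\mu_{k}\le\lambda_{k+r}$ between the ordered eigenvalues directly from Weyl's inequality (using that $\mb A-\mb B$ has at most $r$ nonzero eigenvalues); your variational phrasing just packages that interlacing argument through the kernel of the perturbation, which, as you note, avoids tracking the signs of the eigenvalues of $\mb A-\mb B$. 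No gaps.
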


\begin{lemma}[Theorem A.13 in \cite{bai2010spectral}]\label{A.13}
Let ${\mathbf A}=\(a_{jk}\)_{j,k=1}^{n}$ be a complex matrix and $f$ be an increasing and convex function. Then we have
$$\sum_{j=1}^nf\(\left|a_{jj}\right|\)\leq \sum_{j=1}^nf(s_j\({\mathbf A}\)).$$
where $s_j\({\mathbf A}\)$ denote the singular values of ${\mathbf A}$. When ${\mathbf A}$ is Hermitian, $s_j\({\mathbf A}\)$ can be replaced by eigenvalues and $f$ need not to be increasing.
\end{lemma}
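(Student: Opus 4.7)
The plan is to reduce the problem to a weak majorization statement for the diagonal entries and then invoke the Hardy-Littlewood-Polya inequality. The first reduction handles the phases of the $a_{jj}$: for each $j$, pick $\theta_j$ with $a_{jj}=e^{i\theta_j}|a_{jj}|$ (taking $\theta_j=0$ when $a_{jj}=0$) and set $D=\text{diag}(e^{-i\theta_1},\ldots,e^{-i\theta_n})$. Then $D$ is unitary, so $s_j(D\mathbf A)=s_j(\mathbf A)$, while the diagonal of $D\mathbf A$ is $(|a_{11}|,\ldots,|a_{nn}|)$. A further permutation conjugation (which preserves both the singular values and the multiset $\{|a_{jj}|\}$) lets us assume $|a_{11}|\ge|a_{22}|\ge\cdots\ge|a_{nn}|$. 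It now suffices to prove the weak majorization
$$\sum_{j=1}^k |a_{jj}|\le\sum_{j=1}^k s_j(\mathbf A)\qquad\text{for every }k\in\{1,\ldots,n\}.$$

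For this, let $E_k$ be the $n\times k$ matrix whose columns are the first $k$ standard basis vectors, so that $E_k^*E_k=I_k$ and $E_kE_k^*$ is a rank-$k$ orthogonal projection. Then, by the reduction above,
$$\sum_{j=1}^k |a_{jj}|=\text{Re}\,\tr\!\bigl(E_k^*(D\mathbf A)E_k\bigr)=\text{Re}\,\tr\!\bigl((E_kE_k^*)\,D\mathbf A\bigr),$$
and the Ky Fan maximum principle, $\sum_{j=1}^k s_j(B)=\sup_W \text{Re}\,\tr(WB)$ over partial isometries $W$ of rank at most $k$, bounds this by $\sum_{j=1}^k s_j(D\mathbf A)=\sum_{j=1}^k s_j(\mathbf A)$. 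The Ky Fan bound itself is proved directly from the singular value decomposition $\mathbf A=U\Sigma V^*$: with $W=V_k U_k^*$ (where $U_k,V_k$ are the first $k$ columns of $U,V$) one has $\tr(W\mathbf A)=\sum_{j=1}^k s_j(\mathbf A)$, while for any other partial isometry of rank $\le k$ the inner-product expansion $\tr(W\mathbf A)=\sum_j s_j(\mathbf A)(V^*W^*U)_{jj}$ together with $|(V^*W^*U)_{jj}|\le 1$ and the rank constraint produces the bound.

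Granted the weak majorization, the Hardy-Littlewood-Polya inequality delivers $\sum_{j=1}^n f(|a_{jj}|)\le\sum_{j=1}^n f(s_j(\mathbf A))$ for every increasing convex $f$; one checks it first on the generators $f_c(t)=(t-c)_+$ by summing the largest terms, and then extends by the integral representation of a general increasing convex function. In the Hermitian case the diagonal is real and the Schur-Horn theorem supplies the stronger, full (not merely weak) majorization of the diagonal by the eigenvalue vector, after which Hardy-Littlewood-Polya applies to any convex $f$, with no monotonicity needed, because for full majorization one can also test against $-f_c(-t)=\min(t,c)$. The main obstacle is the Ky Fan step; the phase and permutation reductions and the passage from majorization to the $f$-inequality are entirely routine.
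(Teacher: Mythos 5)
The paper does not prove this lemma at all; it is imported verbatim as Theorem A.13 of Bai--Silverstein, where the proof is a short direct argument: from the SVD $\mathbf A=U\Sigma V^*$ one writes $a_{jj}=\sum_k s_k u_{jk}\overline{v_{jk}}$, bounds $|a_{jj}|\le\sum_k w_{jk}s_k$ with the doubly stochastic weights $w_{jk}=\tfrac12\bigl(|u_{jk}|^2+|v_{jk}|^2\bigr)$, and applies monotonicity plus Jensen to get $f(|a_{jj}|)\le\sum_k w_{jk}f(s_k)$ before summing over $j$ (with $w_{jk}=|u_{jk}|^2$ in the Hermitian case, which is why monotonicity is then unnecessary). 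Your route --- phase/permutation normalization, weak majorization of the diagonal by the singular values via Ky Fan's maximum principle, then Hardy--Littlewood--P\'olya (Tomi\'c--Weyl) for increasing convex $f$, and Schur majorization for the Hermitian case --- is correct and standard, just longer and reliant on heavier machinery. Two small points to tighten: in the Ky Fan step the expansion should read $\tr(W\mathbf A)=\sum_j s_j(\mathbf A)(V^*WU)_{jj}$, and the termwise bound $|(V^*WU)_{jj}|\le 1$ alone does not suffice --- you need the partial-sum bound $\sum_{j\le m}|(V^*WU)_{jj}|\le\min(m,k)$ (from the rank and contraction properties) combined with Abel summation to conclude $\mathrm{Re}\,\tr(W\mathbf A)\le\sum_{j\le k}s_j(\mathbf A)$. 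Both are routine repairs of a classical argument, not genuine gaps. The book's doubly-stochastic proof buys brevity and self-containment; yours buys the intermediate majorization statements, which are of independent use but are not needed elsewhere in this paper.
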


\begin{lemma}[Lemma 2.12 in \cite{bai2010spectral}]\label{2.12}
Let $\{{\tau_k}\}$ be a complex martingale difference sequence with respect to the increasing $\sigma$-fields $\mathscr{F}_k$. Then, for $p > 1, $ ${\rm E}{\left| {\sum {{\tau_k}} } \right|^p} \le {K_p}{\rm E}{({\sum {\left| {{\tau_k}} \right|} ^2})^{p/2}}.$
\end{lemma}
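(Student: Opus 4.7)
The plan is to establish this Burkholder-type inequality in three stages: reduce to real-valued differences, handle the case $p\ge 2$ by induction fueled by a Taylor expansion, and deduce the case $1<p<2$ by duality. Writing $\tau_k = u_k + i v_k$ with $u_k,v_k$ real shows that both $\{u_k\}$ and $\{v_k\}$ are real martingale difference sequences adapted to $\{\mathscr{F}_k\}$; since $|\sum\tau_k|^p\le 2^{p-1}(|\sum u_k|^p + |\sum v_k|^p)$ while $\sum|\tau_k|^2 = \sum u_k^2 + \sum v_k^2$, the complex statement follows from the real one with an extra factor of $2^{p-1}$ absorbed into $K_p$.

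For real differences and $p=2$ the inequality is in fact an equality by orthogonality $E[\tau_j\tau_k]=0$ for $j<k$. For $p>2$ I would induct on $n$. Writing $S_n=\sum_{k\le n}\tau_k$ and $s_n^2=\sum_{k\le n}\tau_k^2$, Taylor's theorem applied to $t\mapsto|t|^p$ at $S_{n-1}$ with increment $\tau_n$ gives
$$|S_n|^p \le |S_{n-1}|^p + p|S_{n-1}|^{p-2}S_{n-1}\tau_n + C_p\bigl(|S_{n-1}|^{p-2}\tau_n^2 + |\tau_n|^p\bigr).$$
The linear term vanishes after conditioning on $\mathscr{F}_{n-1}$. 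Summing over $n$ yields $E|S_n|^p \le C_p\sum_{k\le n}E\bigl(|S_{k-1}|^{p-2}\tau_k^2 + |\tau_k|^p\bigr)$; H\"older's inequality with exponents $p/(p-2)$ and $p/2$ bounds the first sum by $(E\max_k|S_k|^p)^{(p-2)/p}(Es_n^p)^{2/p}$, and Doob's maximal inequality absorbs the maximum into the left-hand side to close the recursion.

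For $1<p<2$, let $q=p/(p-1)>2$; for any bounded $\mathscr{F}_n$-measurable $\eta$ with $\|\eta\|_q\le 1$, set $\eta_k = E[\eta|\mathscr{F}_k] - E[\eta|\mathscr{F}_{k-1}]$. Then $\{\eta_k\}$ is a martingale difference sequence with $\sum_k\eta_k = \eta - E\eta$, and
$$E[\eta S_n] = \sum_k E[\eta_k\tau_k] \le E\Bigl[\Bigl(\sum_k|\eta_k|^2\Bigr)^{1/2}\Bigl(\sum_k|\tau_k|^2\Bigr)^{1/2}\Bigr] \le \Bigl\|\Bigl(\sum_k|\eta_k|^2\Bigr)^{1/2}\Bigr\|_q\|s_n\|_p$$
by Cauchy--Schwarz inside and H\"older outside. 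The $L^q$-norm of the square function of $\{\eta_k\}$ is bounded by a constant times $\|\eta\|_q\le 1$ via the reverse direction of the $p\ge 2$ inequality, so taking supremum over $\eta$ gives $\|S_n\|_p\le K_p\|s_n\|_p$.

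The main obstacle is the $1<p<2$ case, which depends on the reverse companion $\|s_n\|_q\le C_q\|S_n^*\|_q$ with $S_n^* = \max_{k\le n}|S_k|$ in addition to the forward $p\ge 2$ inequality above. This reverse bound is the delicate half of the Burkholder--Davis--Gundy equivalence; it can be obtained by a dual Taylor expansion argument or by a Davis decomposition of $\{\tau_k\}$ into a predictable part and a part controlled by the maximal function, and handling this rigorously is where most of the technical work lies.
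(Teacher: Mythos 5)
The paper does not prove this lemma at all: it is quoted verbatim as Lemma 2.12 of Bai and Silverstein's book, i.e.\ it is Burkholder's square-function inequality, cited as a known result. So the only question is whether your blind reconstruction would stand on its own, and as written it does not quite. Your reduction to real differences and your treatment of $p\ge 2$ are sound and standard: the Taylor bound $|S_{n-1}+\tau_n|^p\le |S_{n-1}|^p+p|S_{n-1}|^{p-2}S_{n-1}\tau_n+C_p(|S_{n-1}|^{p-2}\tau_n^2+|\tau_n|^p)$ is correct (the second-derivative remainder $p(p-1)|\xi|^{p-2}\tau_n^2$ with $|\xi|\le|S_{n-1}|+|\tau_n|$ gives exactly those two terms), the linear term dies under $\E[\,\cdot\,|\mathscr F_{n-1}]$, and the H\"older--Doob absorption closes the recursion, modulo the routine remark that one must first know $\E|S_n|^p<\infty$ (truncate, or note the right-hand side dominates $\sum\E|\tau_k|^p$) before dividing through.

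The genuine gap is in the case $1<p<2$. Your duality argument needs $\bigl\|(\sum_k|\eta_k|^2)^{1/2}\bigr\|_q\le C_q\|\eta\|_q$ for $q=p/(p-1)>2$, which is the \emph{reverse} square-function inequality $\E(\sum\eta_k^2)^{q/2}\le C_q\E|\sum\eta_k|^q$ --- a statement of the same depth as the lemma itself, and you only assert it, explicitly deferring ``most of the technical work.'' A proof that outsources its hardest half to an unproven companion is not complete. The gap is fillable without Davis decompositions, though: for $q>2$ use the identity $s_n^2=M_n^2-2\sum_k M_{k-1}\eta_k$, so $\E s_n^q\le C_q\bigl(\E|M_n|^q+\E|\sum_k M_{k-1}\eta_k|^{q/2}\bigr)$; since $\{M_{k-1}\eta_k\}$ is again a martingale difference sequence and $q/2>1$... but for $q/2\le 2$ one can instead use orthogonality directly, and for $q/2>2$ the already-proved forward inequality at exponent $q/2$ gives $\E|\sum M_{k-1}\eta_k|^{q/2}\le C\,\E(\sum M_{k-1}^2\eta_k^2)^{q/4}\le C\,(\E\max_k|M_k|^q)^{1/2}(\E s_n^q)^{1/2}$, after which Doob and Young's inequality absorb both factors. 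Writing this out (and checking the intermediate range $2<q\le 4$ where $1<q/2\le 2$, which needs the $p\le 2$ forward bound you have not yet established at that point --- so the induction must be organized over dyadic ranges of $q$) is exactly the bookkeeping your sketch omits.
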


\begin{lemma}[Lemma 2.13 in \cite{bai2010spectral}]\label{2.13}
Let $\{{\tau_k}\}$ be a complex martingale difference sequence with respect to the increasing $\sigma$-fields $\mathscr{F}_k$, and let ${\rm E}_{k}$ denote conditional expectation with respect to $\mathcal{F}_k$. Then ,  for $p\geq2,$ $${\rm E}{\left| {\sum {{\tau_k}} } \right|^p} \le {K_p}\({\rm E}{\({\sum {\rm E}_{k-1}{\left| {{\tau_k}} \right|} ^2}\)^{p/2}}+{\rm E}{\sum}{\left| {{\tau_k}} \right|} ^p\).$$
\end{lemma}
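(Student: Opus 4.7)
The plan is to bootstrap Lemma \ref{2.12} via a predictable-plus-martingale decomposition of the square function, and then iterate dyadically in $p$.

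The case $p=2$ is immediate from $\E|\sum\tau_k|^2=\sum\E|\tau_k|^2=\E\sum\E_{k-1}|\tau_k|^2$, so I would assume $p>2$. First, Lemma \ref{2.12} applied to $\{\tau_k\}$ gives
$$\E\lj\sum\tau_k\rj^p\leq K_p\,\E\(\sum|\tau_k|^2\)^{p/2}.$$
Writing $|\tau_k|^2=\E_{k-1}|\tau_k|^2+d_k$ with $d_k:=|\tau_k|^2-\E_{k-1}|\tau_k|^2$, the sequence $\{d_k\}$ is itself an $\{\mathscr{F}_k\}$-martingale difference sequence. Since $p/2\geq 1$, the convexity bound $(a+b)^{p/2}\leq 2^{p/2-1}(a^{p/2}+|b|^{p/2})$ reduces the task to estimating $\E|\sum d_k|^{p/2}$ by a constant multiple of $\E\sum|\tau_k|^p$, modulo the predictable term $\E(\sum\E_{k-1}|\tau_k|^2)^{p/2}$ which already appears on the right-hand side of the claim.

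Next, I would induct on $p$ in dyadic doublings. For the base range $2<p\leq 4$, so that $p/2\in(1,2]$ and $p/4\leq 1$, Lemma \ref{2.12} applied to $\{d_k\}$ yields $\E|\sum d_k|^{p/2}\leq K_{p/2}\,\E(\sum d_k^2)^{p/4}$. Subadditivity of $x\mapsto x^{p/4}$ on $[0,\infty)$ (since $p/4\leq 1$) gives $(\sum d_k^2)^{p/4}\leq\sum|d_k|^{p/2}$, and conditional Jensen $|d_k|^{p/2}\leq 2^{p/2}(|\tau_k|^p+\E_{k-1}|\tau_k|^p)$ combined with the tower property bounds the whole quantity by $C_p\,\E\sum|\tau_k|^p$. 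For $p>4$, invoke the induction hypothesis (the lemma at exponent $p/2<p$) applied to $\{d_k\}$:
$$\E\lj\sum d_k\rj^{p/2}\leq K_{p/2}\Bigl\{\E\(\sum\E_{k-1}d_k^2\)^{p/4}+\E\sum|d_k|^{p/2}\Bigr\}.$$
The second term is controlled as in the base case. For the first, $\E_{k-1}d_k^2\leq\E_{k-1}|\tau_k|^4$, and the H\"older interpolation $\E_{k-1}|\tau_k|^4\leq(\E_{k-1}|\tau_k|^2)^{(p-4)/(p-2)}(\E_{k-1}|\tau_k|^p)^{2/(p-2)}$ followed by an outer H\"older in the index $k$ produces a bound of the form $(\sum\E_{k-1}|\tau_k|^4)^{p/4}\leq C\,(\sum\E_{k-1}|\tau_k|^2)^{p/2-1}\sum\E_{k-1}|\tau_k|^p$. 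A Young-type step then absorbs the first factor into $(\sum\E_{k-1}|\tau_k|^2)^{p/2}$ and leaves a residual dominated by $\E\sum|\tau_k|^p$ via the tower property, closing the induction.

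The main obstacle is the bookkeeping of the H\"older and Young exponents in the inductive step so that the recursion terminates with a constant $K_p$ depending only on $p$. Once the exponents are correctly balanced, dyadic doubling from the base interval $(2,4]$ reaches any prescribed $p$ in finitely many steps, each step incurring only a finite multiplicative loss, and the accumulated constant furnishes the $K_p$ in the statement.
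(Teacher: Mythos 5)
First, a point of comparison: the paper gives no proof of this lemma at all --- it is quoted verbatim as Lemma 2.13 of \cite{bai2010spectral} (Burkholder's Rosenthal-type inequality), so there is no in-paper argument to measure your proposal against. I therefore assess it on its own terms.

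Your overall plan --- Lemma \ref{2.12} to pass to the square function, the decomposition $|\tau_k|^2=\E_{k-1}|\tau_k|^2+d_k$ with $d_k$ a new martingale difference sequence, and dyadic induction in $p$ --- is the standard route, and your base range $2<p\le 4$ is handled correctly. The gap is in the inductive step for $p>4$. The two H\"older applications you invoke actually yield
$$\Bigl(\sum_k\E_{k-1}|\tau_k|^4\Bigr)^{p/4}\le A^{\frac{p(p-4)}{4(p-2)}}\,B^{\frac{p}{2(p-2)}},\qquad A:=\sum_k\E_{k-1}|\tau_k|^2,\quad B:=\sum_k\E_{k-1}|\tau_k|^p,$$
and not the form $C\,A^{p/2-1}B$ that you assert; the exponents $\frac{p(p-4)}{4(p-2)}$ and $\frac{p}{2(p-2)}$ coincide with $\frac{p}{2}-1$ and $1$ only at $p=4$. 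This is not merely cosmetic: had the bound really been $A^{p/2-1}B$, the Young step with conjugate exponents $\frac{p}{p-2}$ and $\frac{p}{2}$ would leave a residual $B^{p/2}=\bigl(\sum_k\E_{k-1}|\tau_k|^p\bigr)^{p/2}$, whose expectation is not dominated by $\E\sum_k|\tau_k|^p$, so the induction would not close as written. The repair is to keep the correct exponents and apply Young's inequality with the conjugate pair $r=\frac{2(p-2)}{p-4}$, $r'=\frac{2(p-2)}{p}$: then $A^{\frac{p(p-4)}{4(p-2)}}B^{\frac{p}{2(p-2)}}\le r^{-1}A^{p/2}+(r')^{-1}B$, the first term is exactly the predictable term already permitted on the right-hand side of the claim (no absorption into the left-hand side is needed, so no smallness of the constant is required), and $\E B=\E\sum_k|\tau_k|^p$ by the tower property. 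With that correction the induction closes and the accumulated constant depends only on $p$; the rest of your argument stands.
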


\begin{lemma}[Theorem 2.3 in \cite{dilworth1993some}]\label{B.27}
Let $\mathscr{F}_k$ be a sequence of increasing $\sigma$-fields and $\{{\tau_k}\}$ be a sequence of integrable random variables. Then, for any $1\leq q\leq p < \infty$ , we have
$$\E \(\sum_{k=1}^\infty \left|\E\(\tau_k|\mathscr{F}_k\)\right|^q\)^{p/q}\leq \(\frac{p}{q}\)^{p/q}\E\(\sum_{k=1}^\infty \left|\tau_k\right|^q\)^{p/q}.$$
\end{lemma}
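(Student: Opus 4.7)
The plan is to reduce the inequality to Doob's $L^{r'}$ maximal inequality via an $L^r$--$L^{r'}$ duality argument, where $r:=p/q$. The boundary case $r=1$ (that is $p=q$) is trivial: since $q\ge 1$, conditional Jensen gives $|\E(\tau_k|\mathscr{F}_k)|^q\le\E(|\tau_k|^q|\mathscr{F}_k)$ pointwise, and summing in $k$ and taking expectations yields the bound with constant $1=(p/q)^{p/q}$.

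For $r>1$, I would first truncate the sum at $N$ and pass to $N\to\infty$ by monotone convergence at the end. Write $S_N:=\sum_{k=1}^N|\tau_k|^q$ and $T_N:=\sum_{k=1}^N|\E(\tau_k|\mathscr{F}_k)|^q$ and invoke duality,
\begin{equation*}
\|T_N\|_r=\sup\bigl\{\E(T_N Z):Z\ge 0,\ \|Z\|_{r'}\le 1\bigr\}.
\end{equation*}
For a fixed competitor $Z$, apply conditional Jensen term-by-term and then the tower identity $\E(\E(X|\mathscr{F}_k)\,Y)=\E(X\,\E(Y|\mathscr{F}_k))$ (both sides equal $\E[\E(X|\mathscr{F}_k)\E(Y|\mathscr{F}_k)]$) to obtain
\begin{equation*}
\E\bigl(|\E(\tau_k|\mathscr{F}_k)|^q Z\bigr)\le\E\bigl(\E(|\tau_k|^q|\mathscr{F}_k)\,Z\bigr)=\E\bigl(|\tau_k|^q\,\E(Z|\mathscr{F}_k)\bigr).
\end{equation*}
Summing on $k$ and replacing each $\E(Z|\mathscr{F}_k)$ by Doob's maximal function $Z^{\ast}:=\sup_j \E(Z|\mathscr{F}_j)$ gives $\E(T_N Z)\le \E(S_N Z^{\ast})\le \|S_N\|_r\,\|Z^{\ast}\|_{r'}$ by Hölder's inequality.

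The decisive step is Doob's $L^{r'}$ maximal inequality applied to the martingale $\{\E(Z|\mathscr{F}_k)\}_k$: since $r'=p/(p-q)>1$ one gets $\|Z^{\ast}\|_{r'}\le\frac{r'}{r'-1}\|Z\|_{r'}=(p/q)\,\|Z\|_{r'}\le p/q$. Combining everything produces $\|T_N\|_r\le(p/q)\,\|S_N\|_r$; raising to the $r$-th power and letting $N\to\infty$ yields the claim. The main point of the argument, and the source of the sharp constant $(p/q)^{p/q}$, is the observation that this constant is precisely Doob's $(r'/(r'-1))^r$, which identifies the duality-plus-Doob route as the right strategy (approaches based on square functions or Burkholder--Davis--Gundy inequalities would give a non-sharp constant). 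A minor technical point is that to apply Doob one may have to replace $Z$ by $\E(Z|\mathscr{F}_\infty)$ with $\mathscr{F}_\infty:=\sigma(\bigcup_k\mathscr{F}_k)$, but since $T_N$ is $\mathscr{F}_\infty$-measurable this substitution leaves $\E(T_N Z)$ unchanged and does not increase $\|Z\|_{r'}$.
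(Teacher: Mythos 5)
Your argument is correct. Note, however, that the paper offers no proof of this lemma at all: it is imported verbatim as Theorem 2.3 of Dilworth \cite{dilworth1993some}, so there is no in-paper argument to compare against. Your route --- the trivial case $p=q$ by conditional Jensen, then for $r=p/q>1$ the $L^r$--$L^{r'}$ duality, self-adjointness of conditional expectation, domination of $\E(Z|\mathscr{F}_k)$ by the maximal function $Z^{\ast}$, and Doob's $L^{r'}$ maximal inequality with constant $r'/(r'-1)=p/q$ --- is exactly the classical proof of this result (Stein's inequality in the case $q=1$), and all the steps check out: the exponent arithmetic $r'=p/(p-q)$ and $r'/(r'-1)=p/q$ is right, the duality formula is valid for nonnegative measurable $T_N$ without an a priori integrability assumption, and the passage $N\to\infty$ by monotone convergence is legitimate. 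Your closing remark about conditioning $Z$ on $\mathscr{F}_\infty$ is harmless but unnecessary, since Doob's inequality applies directly to the martingale $\{\E(Z|\mathscr{F}_j)\}_j$ as it stands.
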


\begin{lemma}[See appendix A.1.4 in \cite{bai2010spectral}]\label{inv}
Suppose that the matrix $\Sigma $ is nonsingular and has the partition as given by $\left( {\begin{array}{*{20}{c}}
{{\Sigma _{11}}}&{{\Sigma _{12}}}\\
{{\Sigma _{21}}}&{{\Sigma _{22}}}
\end{array}} \right),$ then, if $\Sigma_{11}$ is  nonsingular, the inverse of $\Sigma $ has the form $$\left( {\begin{array}{*{20}{c}}
{\Sigma _{11}^{ - 1} + \Sigma _{11}^{ - 1}\Sigma _{12}^{}\Sigma _{22. 1}^{ - 1}\Sigma _{21}^{}\Sigma _{11}^{ - 1}}&{ - \Sigma _{11}^{ - 1}\Sigma _{12}^{}\Sigma _{22. 1}^{ - 1}}\\
{ - \Sigma _{22. 1}^{ - 1}\Sigma _{21}^{}\Sigma _{11}^{ - 1}}&{\Sigma _{22. 1}^{ - 1}}
\end{array}} \right)$$
where $\Sigma _{22. 1}^{ - 1} = \Sigma _{22}^{} - \Sigma _{21}^{}\Sigma _{11}^{ - 1}\Sigma _{12}^{}$.
\end{lemma}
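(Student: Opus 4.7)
The plan is to establish the formula via a block LDU factorization of $\Sigma$, then invert each factor in turn. Since $\Sigma_{11}$ is assumed nonsingular, the Schur complement $\Sigma_{22.1}=\Sigma_{22}-\Sigma_{21}\Sigma_{11}^{-1}\Sigma_{12}$ is well defined, and a direct block multiplication verifies
$$\Sigma=\begin{pmatrix} \mb I & 0 \\ \Sigma_{21}\Sigma_{11}^{-1} & \mb I \end{pmatrix}\begin{pmatrix} \Sigma_{11} & 0 \\ 0 & \Sigma_{22.1} \end{pmatrix}\begin{pmatrix} \mb I & \Sigma_{11}^{-1}\Sigma_{12} \\ 0 & \mb I \end{pmatrix}.$$
This reduces the task to inverting a block-diagonal matrix sandwiched between two unipotent block-triangular matrices.

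Next I would check that $\Sigma_{22.1}$ is nonsingular. Taking determinants in the factorization above and noting that each of the two unipotent triangular factors has determinant $1$, one obtains $\det\Sigma=\det\Sigma_{11}\cdot\det\Sigma_{22.1}$, so nonsingularity of $\Sigma$ together with that of $\Sigma_{11}$ forces $\Sigma_{22.1}$ to be invertible. The three factors then invert explicitly: each unipotent block-triangular factor inverts by flipping the sign of its off-diagonal block, and the diagonal factor inverts blockwise. Reversing the product order gives
$$\Sigma^{-1}=\begin{pmatrix} \mb I & -\Sigma_{11}^{-1}\Sigma_{12} \\ 0 & \mb I \end{pmatrix}\begin{pmatrix} \Sigma_{11}^{-1} & 0 \\ 0 & \Sigma_{22.1}^{-1} \end{pmatrix}\begin{pmatrix} \mb I & 0 \\ -\Sigma_{21}\Sigma_{11}^{-1} & \mb I \end{pmatrix},$$
and a routine multiplication collects the four resulting blocks into precisely the stated form.

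There is no substantive obstacle here: the argument is a standard textbook manipulation whose only nontrivial input is the invertibility of the Schur complement, which falls out of the determinant identity above. An entirely equivalent alternative would be simply to write down the claimed inverse and verify by direct block multiplication that $\Sigma\Sigma^{-1}=\mb I$ block by block, a computation that again relies only on the definition of $\Sigma_{22.1}$ and the assumed nonsingularities. I would favor the LDU route, however, because the same decomposition is reusable in adjacent identities (Woodbury, Sylvester determinant theorem, etc.) that the paper's later resolvent manipulations implicitly draw upon.
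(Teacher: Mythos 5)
Your argument is correct and complete: the block LDU factorization, the determinant identity forcing invertibility of the Schur complement $\Sigma_{22.1}=\Sigma_{22}-\Sigma_{21}\Sigma_{11}^{-1}\Sigma_{12}$, and the reversed-order inversion of the three factors reproduce exactly the stated block inverse. The paper gives no proof of its own here (the lemma is imported verbatim from the cited reference), and your derivation is precisely the standard one behind that source; the only caveat worth noting is that the paper's displayed definition contains a typo --- the left-hand side should read $\Sigma_{22.1}$ rather than $\Sigma_{22.1}^{-1}$ --- which you have implicitly and correctly repaired.
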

\begin{lemma}[(A.1.12) in \cite{bai2010spectral}]\label{A.1.12}
Let $z = u + iv, v > 0, $ and let $\mb A$ be an $n \times n$ Hermitian matrix.  ${{\mb A}_k}$ be the k-th major sub-matrix of $\mb A$ of order $(n-1)$, to be the matrix resulting from the $k$-th row and column from $\mb A$.  Then
$$\left| {{\rm tr}{{(\mb A - z{I_n})}^{ - 1}} -{ \rm tr}{{({{\mb A}_k} - z{I_{n - 1}})}^{ - 1}}} \right| \le v^{-1}.$$
\end{lemma}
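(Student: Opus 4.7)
The plan is to reduce to a rank-one Schur-complement computation and then exploit positivity of imaginary parts to obtain the sharp constant $1/v$. Since both traces are invariant under simultaneous permutation of rows and columns, I would first assume without loss of generality that $k = n$ and partition
$$\mb A-z\mb I_n = \left(\begin{array}{cc} \mb A_n - z\mb I_{n-1} & \mb a \\ \mb a^* & a_{nn} - z \end{array}\right),$$
where $\mb a$ is the $n$-th column with its diagonal entry removed and $a_{nn}$ is the removed diagonal entry.

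Next I would apply Lemma \ref{inv} with $\Sigma_{11} = \mb A_n - z\mb I_{n-1}$, which is invertible because $v > 0$, and read off the four blocks of $(\mb A-z\mb I_n)^{-1}$. Taking the trace and using the cyclic identity $\tr\bigl[(\mb A_n - z\mb I_{n-1})^{-1}\mb a\mb a^*(\mb A_n - z\mb I_{n-1})^{-1}\bigr] = \mb a^*(\mb A_n - z\mb I_{n-1})^{-2}\mb a$ collapses the difference to a single scalar ratio,
$$\tr(\mb A-z\mb I_n)^{-1} - \tr(\mb A_n - z\mb I_{n-1})^{-1} = \frac{1 + \mb a^*(\mb A_n - z\mb I_{n-1})^{-2}\mb a}{a_{nn} - z - \mb a^*(\mb A_n - z\mb I_{n-1})^{-1}\mb a}.$$

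Third, I would spectrally decompose the Hermitian matrix $\mb A_n = U\,\mathrm{diag}(\mu_j)\,U^*$ and set $b_j = (U^*\mb a)_j$, so that the numerator becomes $1 + \sum_j |b_j|^2/(\mu_j - z)^2$ and the denominator $a_{nn} - z - \sum_j |b_j|^2/(\mu_j - z)$. The triangle inequality gives $|\text{numerator}| \leq 1 + \sum_j |b_j|^2/|\mu_j - z|^2$. For the denominator, since $a_{nn}$ and $\mu_j$ are real and $\Im z = v$,
$$\Im\Bigl(a_{nn} - z - \sum_j \frac{|b_j|^2}{\mu_j - z}\Bigr) = -v\Bigl(1 + \sum_j \frac{|b_j|^2}{|\mu_j - z|^2}\Bigr),$$
so $|\text{denominator}| \geq v\bigl(1 + \sum_j |b_j|^2/|\mu_j - z|^2\bigr)$, and the ratio is bounded in modulus by $1/v$.

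I do not expect any serious obstacle. The delicate point is the exact cancellation between the triangle-inequality bound on the numerator and the imaginary-part bound on the denominator, which is what produces the sharp constant $1/v$; a more naive approach via Cauchy interlacing and integration by parts would only give the weaker bound $\pi/v$.
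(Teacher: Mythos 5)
Your proof is correct and is essentially the standard argument for this result, which the paper simply cites from Bai and Silverstein (A.1.12) without reproving: the block-inversion (Schur complement) identity reduces the trace difference to the scalar ratio you display, and the matching of the triangle-inequality bound on the numerator with the imaginary-part lower bound on the denominator yields exactly $v^{-1}$. Nothing further is needed.
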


\begin{lemma}[Lemma 2.11 in \cite{bai2010spectral}]\label{semst}
Let $z = u + iv, v > 0, $ $s(z)$ be the Stieltjes transform of the semicircular law.  Then,  we have $s(z) =  - \frac{1}{2}(z - \sqrt {{z^2} - 4} )$.
\end{lemma}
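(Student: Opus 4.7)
The plan is to prove the identity by Stieltjes inversion: I take $g(z) := -\tfrac12(z-\sqrt{z^2-4})$ as a candidate for $s(z)$, verify that $g$ is the Stieltjes transform of some probability measure on $\mathbb{R}$, and then show by a boundary-value computation that this measure has density $f(x)=\tfrac{1}{2\pi}\sqrt{4-x^2}$ on $[-2,2]$. Since the Stieltjes transform determines the measure uniquely, this forces $g=s$.

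First, I fix the branch of $\sqrt{z^2-4}$ to be the one analytic on $\mathbb{C}\setminus[-2,2]$ normalized by $\sqrt{z^2-4}/z\to 1$ as $|z|\to\infty$ (equivalently, the product of principal branches of $\sqrt{z-2}$ and $\sqrt{z+2}$ in a neighborhood of infinity). Using the expansion $\sqrt{z^2-4}=z-2/z+O(|z|^{-3})$ at infinity, one checks that $g$ is analytic on $\mathbb{C}\setminus[-2,2]$, maps $\mathbb{C}^+$ into $\mathbb{C}^+$, and satisfies $g(z)=-1/z+O(|z|^{-3})$ at infinity. By the Nevanlinna/Herglotz representation these properties characterize $g$ as the Stieltjes transform of a unique probability measure $\mu$ on $\mathbb{R}$.

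Next I identify $\mu$ via Stieltjes inversion. For $x\in(-2,2)$, as $z=x+iy\to x$ from above, $z^2-4\to x^2-4<0$ and, under the chosen branch, $\sqrt{z^2-4}\to i\sqrt{4-x^2}$; hence $\Im g(x+i0^+)=\tfrac12\sqrt{4-x^2}$, giving
$$\frac{d\mu}{dx}(x)=\frac{1}{\pi}\Im g(x+i0^+)=\frac{\sqrt{4-x^2}}{2\pi},$$
which is exactly the semicircular density. For $|x|>2$, $\sqrt{z^2-4}$ has a real boundary value, so $\Im g(x+i0^+)=0$ and $\mu$ places no mass there. Thus $\mu$ is the semicircular law, and $g=s$.

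The main obstacle is the branch-tracking for $\sqrt{z^2-4}$: one must verify that the branch selected by its behavior at infinity does yield positive imaginary part throughout $\mathbb{C}^+$, and that its boundary limits on the two sides of $[-2,2]$ differ by a sign, with the upper-half-plane side producing $+i\sqrt{4-x^2}$. This bookkeeping is routine but easy to bungle; an alternative that sidesteps branch manipulations is to verify directly that $g$ satisfies $g^2+zg+1=0$ (e.g.\ by expanding $s(z)$ in moments of the semicircle via Catalan numbers for large $|z|$) and then select the correct root from the requirement $s(z)\to 0$ as $|z|\to\infty$.
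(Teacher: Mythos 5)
The paper never proves this lemma --- it is quoted verbatim as Lemma 2.11 of Bai--Silverstein --- so there is no in-paper argument to compare against; your proof stands on its own and is correct. The Herglotz/Nevanlinna step (analyticity off $[-2,2]$, $g(\mathbb{C}^+)\subset\mathbb{C}^+$, and $g(z)=-1/z+O(|z|^{-3})$ together force $g$ to be the Stieltjes transform of a probability measure) is sound, and your boundary value $\sqrt{z^2-4}\to i\sqrt{4-x^2}$ on the upper side of the cut is the right sign for the branch normalized by $\sqrt{z^2-4}/z\to 1$. The one verification you flag, $\Im\sqrt{z^2-4}>\Im z$ on $\mathbb{C}^+$, follows directly from the two real equations $\Re w\,\Im w=\Re z\,\Im z$ and $(\Re w)^2-(\Im w)^2=(\Re z)^2-(\Im z)^2-4$ for $w=\sqrt{z^2-4}$; note this same-sign observation is exactly what the paper invokes in its proof of Lemma \ref{sz}. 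Two minor additions would tighten the argument: rule out atoms at $\pm 2$ (immediate, since $g$ extends continuously there with finite values, so $v\,\Im g(x+iv)\to 0$), and note that the alternative you sketch --- check $g^{2}+zg+1=0$ and select the root with $g(z)\to 0$ at infinity, or equivalently evaluate $\int_{-2}^{2}\frac{\sqrt{4-x^{2}}}{2\pi(x-z)}\,dx$ by the substitution $x=2\cos\theta$ --- is the standard textbook derivation and sidesteps the branch bookkeeping entirely, at the cost of having to justify the root selection on all of $\mathbb{C}^{+}$ by a connectedness argument.
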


\begin{lemma}[Lemma B.22 in \cite{bai2010spectral}]\label{B.22}
Let $G$ be a function of bounded variation. Let $g(z)$ denote its Stieltjes transform. When $z = u + iv,$ with  $v>0,$ we have
$$\sup_u \lj g(z)\rj\leq \pi v^{-1}\|G\|_d.$$
\end{lemma}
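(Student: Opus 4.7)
The plan is to estimate $|g(z)|$ by integration by parts, converting the measure $dG$ into an ordinary Lebesgue integral against a kernel that is easy to bound uniformly in $u$.

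First I would write out the Stieltjes transform as $g(z)=\int_{-\infty}^{\infty}(y-z)^{-1}\,dG(y)$ and integrate by parts. Since $G$ is of bounded variation, the one-sided limits $G(\pm\infty)$ exist and $G$ is bounded; together with $(y-z)^{-1}\to 0$ as $|y|\to\infty$ (because $v>0$), the boundary terms $\bigl[G(y)(y-z)^{-1}\bigr]_{-\infty}^{\infty}$ vanish. This yields the identity
\[
g(z)=\int_{-\infty}^{\infty}\frac{G(y)}{(y-z)^{2}}\,dy,
\]
valid for every $z=u+iv$ with $v>0$.

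Next I would take absolute values and pull the sup-norm of $G$ outside:
\[
|g(z)|\;\le\;\|G\|_d\int_{-\infty}^{\infty}\frac{dy}{|y-z|^{2}}\;=\;\|G\|_d\int_{-\infty}^{\infty}\frac{dy}{(y-u)^{2}+v^{2}}.
\]
The last integral is a standard Poisson-type integral: substituting $t=(y-u)/v$ gives $\int_{-\infty}^{\infty}\frac{dt}{v(t^{2}+1)}=\pi/v$. The resulting bound $|g(z)|\le\pi v^{-1}\|G\|_d$ is independent of $u$, so taking the supremum over $u$ completes the proof.

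There is essentially no obstacle here; the only subtlety is justifying the vanishing of the boundary terms in the integration by parts, which is immediate from the bounded-variation hypothesis (guaranteeing finite limits of $G$ at $\pm\infty$) together with the decay of $(y-z)^{-1}$ at infinity when $\Im z>0$.
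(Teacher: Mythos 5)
Your argument is correct and is essentially the standard proof of this fact (it is the same integration-by-parts argument used for Lemma B.22 in the cited reference; the present paper simply quotes the lemma without proof). The key identity $g(z)=\int G(y)(y-z)^{-2}\,dy$ and the Poisson integral $\int\bigl((y-u)^2+v^2\bigr)^{-1}dy=\pi/v$ are exactly what is needed, and your justification of the vanishing boundary terms is adequate.
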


\begin{lemma}\label{sz}
Let $s(z)$ be the Stieltjes transform of the semicircular law, which is given by Lemma \ref{semst}. We have $\lj s(z)\rj<1.$
\end{lemma}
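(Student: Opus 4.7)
The plan is to exploit the quadratic functional equation satisfied by the semicircle Stieltjes transform, which follows immediately from the explicit expression in Lemma \ref{semst}. Squaring the identity $2s(z) + z = \sqrt{z^{2}-4}$ yields the key relation $s(z)^{2} + z\,s(z) + 1 = 0$. My approach will rest entirely on this one algebraic identity together with the fact that $\operatorname{Im} s(z) > 0$ for $z \in \mathbb{C}^{+}$ (which holds because $s$ is the Stieltjes transform of a probability measure, and can also be read off from the asymptotic $s(z) \sim -1/z$ as $|z|\to\infty$ combined with the branch choice in Lemma \ref{semst}).

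The decisive manipulation is to multiply the quadratic equation by $\overline{s(z)}$, producing
\[
(s(z) + z)\,|s(z)|^{2} = -\overline{s(z)}.
\]
Taking imaginary parts on both sides converts this into a scalar identity: writing $v = \operatorname{Im} z > 0$ and $b = \operatorname{Im} s(z) > 0$, the left-hand side contributes $(b + v)|s(z)|^{2}$ while the right-hand side contributes $b$. Solving gives $|s(z)|^{2} = b/(b+v)$, and since $v > 0$ and $b > 0$ this ratio is strictly less than $1$.

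There is no serious obstacle here; the main thing to be careful about is justifying $\operatorname{Im} s(z) > 0$ so that we are working with the correct branch of the square root. Strict positivity of $b$ is what delivers strict inequality $|s(z)| < 1$, not merely $\le 1$, and that is crucial later when $s(z)$ appears in denominators in the main Stieltjes-transform estimates.
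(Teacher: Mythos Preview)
Your argument is correct and self-contained. Both your proof and the paper's rest on the quadratic relation $s(z)^2 + z\,s(z) + 1 = 0$, but they exploit it differently. The paper observes that the two roots $-\tfrac{1}{2}(z\mp\sqrt{z^2-4})$ have product $1$, so $s(z)$ is the root of smaller modulus provided $|z-\sqrt{z^2-4}|<|z+\sqrt{z^2-4}|$; this last inequality is asserted by noting that the real and imaginary parts of $z$ and of $\sqrt{z^2-4}$ share signs. Your route instead multiplies the quadratic by $\overline{s(z)}$ and extracts the imaginary part to get the explicit formula $|s(z)|^2 = \operatorname{Im} s(z)/(\operatorname{Im} s(z)+v)$, from which $|s(z)|<1$ is immediate. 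Your approach avoids any discussion of the branch of $\sqrt{z^2-4}$ and yields a quantitative identity rather than just an inequality; the paper's version is terser but leaves the sign claim unjustified. Either way the lemma follows in one line once the quadratic is in hand.
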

\begin{proof}
 Since $s(z)\(-\frac{1}{2}\(z+\sqrt{z^2-4}\)\)=1$, we shall complete the proof by noticing that both the real and imaginary parts of $z$ and $\sqrt{z^2-4}$ have the same signs.
\end{proof}\

\subsection{Some auxiliary lemmas: Part II}
\begin{lemma}\label{B.26}
Let ${\mathbf A}=\(a_{jk}\)_{j,k=1}^{2n}$ be a $2n\times2n$ non-random matrix and ${\mathbf X}=(x_1',\cdots,x_n')'$ be a random quaternion vector of independent entries, where for $1\leq j\leq n$,
 $x_j=\left(
        \begin{array}{cc}
          e_j+f_j\cdot i & c_j+d_j\cdot i\\
          -c_j+d_j\cdot i & e_j-f_j\cdot i \\
        \end{array}
      \right)
 =\left(
        \begin{array}{cc}
          \alpha_j & \beta_j \\
          -\bar \beta_j & \bar \alpha_j \\
        \end{array}
      \right)
$.
Assume that $\E x_j=\left(
                      \begin{array}{cc}
                        0 & 0 \\
                        0 & 0 \\
                      \end{array}
                    \right)
$, $\E\left\|x_j\right\|^2=1$, and $\E\left\|x_j\right\|^l\leq \phi_l$. Then, for any $p\geq 1$, we have
$$\E\left|\tr{\mathbf X}^*{\mathbf A}{\mathbf X}-\tr {\mathbf A} \right|^p \leq C_p \(\(\phi_4\tr\({\mathbf A}{\mathbf A^*}\)\)^{p/2}+\phi_{2p}\tr\({\mathbf A}{\mathbf A^*}\)^{p/2}\),$$
where $C_p$ is a constant depending on $p$ only.
\end{lemma}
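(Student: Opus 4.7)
The plan is to adapt the martingale proof of the scalar analogue (Bai--Silverstein's Lemma B.26) to the quaternion block setting, exploiting the key identity $x_jx_j^*=\|x_j\|^2\mb I_2$ (verified directly from the $2\times 2$ representation of a quaternion) to collapse the diagonal terms. First I partition $\mb A$ into an $n\times n$ array of $2\times 2$ blocks $\mb A_{jk}$ and write
\[
\tr\mb X^*\mb A\mb X-\tr\mb A=\sum_{k=1}^n(\|x_k\|^2-1)\tr\mb A_{kk}+\sum_{j\neq k}\tr(x_j^*\mb A_{jk}x_k),
\]
where the diagonal contribution follows from $\tr(x_k^*\mb A_{kk}x_k)=\tr(\mb A_{kk}x_kx_k^*)=\|x_k\|^2\tr\mb A_{kk}$, and the off-diagonal sum has zero mean since $\E x_j=0$.

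Next, I define $\mathscr{F}_k=\sigma(x_1,\ldots,x_k)$ and form the martingale differences
\[
\tau_k=(\|x_k\|^2-1)\tr\mb A_{kk}+\sum_{j<k}\bigl[\tr(x_j^*\mb A_{jk}x_k)+\tr(x_k^*\mb A_{kj}x_j)\bigr],
\]
whose sum over $k$ equals $\tr\mb X^*\mb A\mb X-\tr\mb A$. Lemma \ref{2.13} then yields
\[
\E\lj\sum_k\tau_k\rj^p\le K_p\Bigl(\E\bigl(\sum_k\E_{k-1}|\tau_k|^2\bigr)^{p/2}+\E\sum_k|\tau_k|^p\Bigr).
\]

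To control the first term I split $\tau_k$ into its diagonal and off-diagonal pieces. The diagonal piece contributes at most $\phi_4|\tr\mb A_{kk}|^2$ by the fourth-moment assumption. The off-diagonal piece is \emph{linear} in $x_k$ with $\mathscr{F}_{k-1}$-measurable coefficient $y_k=\sum_{j<k}x_j^*\mb A_{jk}$; its conditional second moment is bounded, via $\E x_kx_k^*=\mb I_2$, by $C\tr(y_ky_k^*)$, and summing in $k$ using the orthogonality $\E x_ix_j^*=\delta_{ij}\mb I_2$ gives $\E\sum_k\tr(y_ky_k^*)\le\tr(\mb A\mb A^*)$. Raising this to the $p/2$-th power is handled by a nested martingale argument via Lemma \ref{B.27} (applied with $q=2$), producing the contribution $(\phi_4\tr(\mb A\mb A^*))^{p/2}$. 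The residual term $\E\sum_k|\tau_k|^p$ is treated similarly: the diagonal piece uses $\E\big|\|x_k\|^2-1\big|^p\le C\phi_{2p}$ together with Lemma \ref{A.13} to bound $\sum_k|\tr\mb A_{kk}|^p$ by a constant times $\tr(\mb A\mb A^*)^{p/2}$, and the off-diagonal piece, again linear in $x_k$, contributes the remaining $\phi_{2p}\tr(\mb A\mb A^*)^{p/2}$ term.

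The principal obstacle will be the $p/2$-moment estimate $\E\bigl(\sum_k\tr(y_ky_k^*)\bigr)^{p/2}$, because $\sum_k\tr(y_ky_k^*)$ is itself a quadratic form in $(x_1,\ldots,x_{n-1})$ and thus essentially another instance of the very inequality we are trying to prove. I expect to iterate the martingale decomposition through Lemma \ref{B.27}, lowering the exponent at each pass until it reduces to the already-controlled first moment; alternatively, one can expand the quadratic form into diagonal plus off-diagonal blocks and apply induction on $n$. Once this recursion terminates, combining both contributions yields the claimed bound with an appropriate constant $C_p$.
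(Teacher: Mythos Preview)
Your plan is correct and matches the paper's proof: the same block decomposition $\tr\mb X^*\mb A\mb X-\tr\mb A=\sum_k(\|x_k\|^2-1)\tr\tilde a_{kk}+\text{off-diagonal}$, the same martingale differences, Burkholder via Lemma~\ref{2.13}, and the same recognition that the hard term $\E\bigl(\sum_k\E_{k-1}|\cdot|^2\bigr)^{p/2}$ is, after Lemma~\ref{B.27} (applied with $q=1$, not $q=2$), precisely $\E\bigl(\tr\mb X^*\mb A\mb A^*\mb X\bigr)^{p/2}$, so one invokes the induction hypothesis with $\mb A$ replaced by $\mb A\mb A^*$ and exponent halved. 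The paper organizes this as an explicit induction on $p$ through dyadic ranges $2^t<p\le 2^{t+1}$ with the base cases $p=1$ and $1<p\le2$ handled directly; your ``induction on $n$'' alternative is a dead end and should be discarded.
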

\begin{proof}
At first, cut ${\mathbf A}$ into $n^2$ blocks and rewrite ${\mathbf A}=\(\tilde{a}_{jk}\)_{j,k=1}^{n}$,
 where $\tilde{a}_{jk}=\(
   \begin{array}{cc}
    a_{2j-1,2k-1} & a_{2j-1,2k} \\
       a_{2j,2k-1} & a_{2j,2k}\\
        \end{array}
         \)
$. Use the expression
\begin{equation}\label{B.26p}
  \tr{\mathbf X}^*{\mathbf A}{\mathbf X}-\tr {\mathbf A}=\sum_{j=1}^{n} \(\|{x_j}\|^2-1\)\tr \tilde{a}_{jj}+\sum_{j=1}^{n} \sum_{k=1}^{j-1}\tr\(\tilde{a}_{kj}{x_j}{x}_k^*+\tilde{a}_{jk}{x}_k x_j^*\).
\end{equation}
For $p=1$, noticing that $\E\left|\alpha_{j}\right|^2\leq 1$ and $\E\left|\beta_{j}\right|^2\leq 1$, by Lemma \ref{A.13} and Lyapunov inequality, we obtain
\begin{align*}
  &\E\left|\tr{\mathbf X}^*{\mathbf A}{\mathbf X}-\tr {\mathbf A}\right| \\
  \leq &\sum_{j=1}^{n} \E\left|\|{x_j}\|^2-1\right| \left|\tr \tilde{a}_{jj}\right|+\(\E\left|\sum_{j=1}^{n} \sum_{k=1}^{j-1}\tr\(\tilde{a}_{kj}{x_j}{x}_k^*+\tilde{a}_{jk}{x}_k x_j^*\)\right|^2\)^{1/2} \\
    \leq &C\(\sum_{j=1}^{n} \left|\tr \tilde{a}_{jj}\right|+\(\E\left|\sum_{j=1}^{n} \sum_{k=1}^{j-1}\tr\(\tilde{a}_{kj}{x_j}{x}_k^*\)\right|^2+\E\left|\sum_{j=1}^{n} \sum_{k=1}^{j-1}\tr\(\tilde{a}_{jk}{x}_k x_j^*\)\right|^2\)^{1/2}\) \\
  \leq & C \Bigg(\sum_{j=1}^n\(\left|a_{2j-1,2j-1}\right|+\left|a_{2j,2j}\right|\) \\
  & + \bigg(\sum_{j=1}^{n} \sum_{k=1}^{n}\(\left|a_{2j-1,2k-1}\right|^2+\left|a_{2j-1,2k}\right|^2+\left|a_{2j,2k-1}\right|^2+\left|a_{2j,2k}\right|^2\)\bigg)^{1/2}\Bigg) \\
  \leq & C\left[\tr\({\mathbf A}{\mathbf A}^*\)^{1/2}+\(\tr{\mathbf A}{\mathbf A}^*\)^{1/2}\right]  \leq  C_1\left[\phi_2\tr\({\mathbf A}{\mathbf A}^*\)^{1/2}+\(\phi_4\tr{\mathbf A}{\mathbf A}^*\)^{1/2}\right].
\end{align*}
Now, assume $1<p\leq2$. By Lemma \ref{2.12} and Lemma \ref{A.13}, we have

\begin{align}\label{B.26p1}
&\E\left|\sum_{j=1}^{n}\(\|{x_j}\|^2-1\)\tr \tilde{a}_{jj}\right|^p\leq C \E\(\sum_{j=1}^{n}\left|\|{x_j}\|^2-1\right|^2\left|\tr \tilde{a}_{jj}\right|^2\)^{p/2} \notag\\
&\leq C\sum_{j=1}^{n}\E\left|\|{x_j}\|^2-1\right|^p\left|\tr \tilde{a}_{jj}\right|^p
\leq C_p\phi_{2p}\tr\({\mathbf A}{\mathbf A}^*\)^{p/2}.
\end{align}

Furthermore, using the \hi, we have

\begin{align}\label{B.26p2}
  &\E\left|\sum_{j=1}^{n} \sum_{k=1}^{j-1}\tr\(\tilde{a}_{kj}{x_j}{x}_k^*+\tilde{a}_{jk}{x}_k x_j^*\)\right|^p \notag\\
  \leq & C\(\E\left|\sum_{j=1}^{n} \sum_{k=1}^{j-1}\tr\(\tilde{a}_{kj}{x_j}{x}_k^*+\tilde{a}_{jk}{x}_k x_j^*\)\right|^2\)^{p/2}
  \leq  C_p\(\phi_4\tr{\mathbf A}{\mathbf A}^*\)^{p/2}.
\end{align}

Combining (\ref{B.26p1}) and (\ref{B.26p2}), we complete the proof of the lemma for the case $1<p\leq2$.
Now, we will proceed with the proof of the lemma by induction on $p$. Assume that the lemma is true for $1\leq p\leq 2^t$, then consider the case $2^t<p\leq2^{t+1}$ with $t\geq1$.
Denote $\E_j$ be the conditional expectation given $\{x_1,\cdots,x_j\}$. By Lemma \ref{2.13} and Lemma \ref{A.13}, we obtain
\begin{align}\label{B.26p3}
&\E\left|\sum_{j=1}^{n}\(\|{x_j}\|^2-1\)\tr \tilde{a}_{jj}\right|^p \notag\\
\leq &C\(\(\sum_{j=1}^{n}\E\left|\|{x_j}\|^2-1\right|^2\left|\tr \tilde{a}_{jj}\right|^2\)^{p/2}+\sum_{j=1}^{n}\E\left|\|{x_j}\|^2-1\right|^p\left|\tr \tilde{a}_{jj}\right|^p\) \notag\\
\leq &C\(\(\phi_4\tr{\mathbf A}{\mathbf A}^*\)^{p/2}+\phi_{2p}\tr\({\mathbf A}{\mathbf A}^*\)^{p/2}\),
\end{align}
and
\begin{align}\label{B.26p4}
&\E\left|\sum_{j=1}^{n}\sum_{k=1}^{j-1}\tr \tilde{a}_{kj}x_jx_k^*\right|^p \notag\\
\leq &C\(\E\(\sum_{j=1}^{n}\E_{j-1}\left|\sum_{k=1}^{j-1}\tr\tilde{a}_{kj}x_jx_k^*\right|^2\)^{p/2}+\sum_{j=1}^n\E\left|\sum_{k=1}^{j-1}\tr\tilde{a}_{kj}x_jx_k^*\right|^p\).
\end{align}
Write $x_k^*\tilde{a}_{kj}=\left(
                                \begin{array}{cc}
                                  e_{kj} & f_{kj} \\
                                  g_{kj} & h_{kj} \\
                                \end{array}
                              \right)
$, where
\begin{align*}
&e_{kj}=a_{2k-1,2j-1}\bar\alpha_k-a_{2k,2j-1}\beta_k, \quad f_{kj}=a_{2k-1,2j}\bar\alpha_k-a_{2k,2j}\beta_k, \\
&g_{kj}=a_{2k-1,2j-1}\bar \beta_k+a_{2k,2j-1}\alpha_k, \quad h_{kj}=a_{2k-1,2j}\bar\beta_k+a_{2k,2j}\alpha_k.
\end{align*}
Then, we have
\begin{small}
\begin{align}\label{B.26p5}
&\E\(\sum_{j=1}^{n}\E_{j-1}\left|\sum_{k=1}^{j-1}\tr\tilde{a}_{kj}x_jx_k^*\right|^2\)^{p/2}=\E\(\sum_{j=1}^{n}\E_{j-1}\left|\sum_{k=1}^{j-1}\tr x_jx_k^*\tilde{a}_{kj}\right|^2\)^{p/2}\notag\\
=&\E\(\sum_{j=1}^{n}\E_{j-1}\left|\sum_{k=1}^{j-1}e_{kj}\alpha_j-f_{kj}\bar\beta_{j}+g_{jk}\beta_{j}+h_{jk}\bar\alpha_{j}\right|^2\)^{p/2}\notag\\
\leq&C\E\(\sum_{j=1}^{n}\(\left|\sum_{k=1}^{j-1}e_{kj}\right|^2+\left|\sum_{k=1}^{j-1}f_{kj}\right|^2+\left|\sum_{k=1}^{j-1}g_{kj}\right|^2+\left|\sum_{k=1}^{j-1}e_{kj}\right|^2\)\)^{p/2}\notag\\
\leq&C\E\Bigg(\sum_{j=1}^{n}\bigg(\left|\E_{j-1}\sum_{k=1}^{n}e_{kj}\right|^2+\left|\E_{j-1}\sum_{k=1}^{n}f_{kj}\right|^2 \notag\\
+&\left|\E_{j-1}\sum_{k=1}^{n}g_{kj}\right|^2+\left|\E_{j-1}\sum_{k=1}^{n}e_{kj}\right|^2\bigg)\Bigg)^{p/2}\notag\\
\leq&C\E\(\sum_{j=1}^{n}\E_{j-1}\bigg(\left|\sum_{k=1}^{n}e_{kj}\right|^2+\left|\sum_{k=1}^{n}f_{kj}\right|^2+\left|\sum_{k=1}^{n}g_{kj}\right|^2+\left|\sum_{k=1}^{n}e_{kj}\right|^2\bigg)\)^{p/2}.
\end{align}
\end{small}
Using Lemma \ref{B.27} with $q=1$ and the induction hypothesis with ${\mathbf A}$ replaced by ${\mathbf A}{\mathbf A}^*$, together with the fact that $\tr\({\mathbf A}^*{\mathbf A}\)^2\leq \(\tr{\mathbf A}^*{\mathbf A}\)^2,$ we shall get
\begin{align}\label{B.26p6}
&\E\(\sum_{j=1}^{n}\E_{j-1}\bigg(\left|\sum_{k=1}^{n}e_{kj}\right|^2+\left|\sum_{k=1}^{n}f_{kj}\right|^2+\left|\sum_{k=1}^{n}g_{kj}\right|^2+\left|\sum_{k=1}^{n}e_{kj}\right|^2\bigg)\)^{p/2}\notag\\
\leq&C\E\(\sum_{j=1}^{n}\bigg(\left|\sum_{k=1}^{n}e_{kj}\right|^2+\left|\sum_{k=1}^{n}f_{kj}\right|^2+\left|\sum_{k=1}^{n}g_{kj}\right|^2+\left|\sum_{k=1}^{n}e_{kj}\right|^2\bigg)\)^{p/2}\notag\\
=&C\E\(\tr{\mathbf X}^*{\mathbf A}{\mathbf A}^*{\mathbf X}\)^{p/2}\notag\\
\leq&C\(\(\tr{\mathbf A}{\mathbf A}^*\)^{p/2}+\E\left|\tr{\mathbf X}^*{\mathbf A}{\mathbf A}^*{\mathbf X}-\tr{\mathbf A}{\mathbf A}^*\right|^{p/2}\)\notag\\
\leq&C\(\(\tr{\mathbf A}{\mathbf A}^*\)^{p/2}+\(\phi_4\tr\({\mathbf A}{\mathbf A}^*\)^2\)^{p/4}+\phi_p\tr\({\mathbf A}{\mathbf A}^*\)^{p/2}\)\notag\\
\leq&C\(\(\phi_4\tr\({\mathbf A}{\mathbf A}^*\)\)^{p/2}+\phi_{2p}\tr\({\mathbf A}{\mathbf A}^*\)^{p/2}\).
\end{align}
Moreover, using Lemma \ref{2.13} and Lemma \ref{A.13}, we have
\begin{align}\label{B.26p7}
&\sum_{j=1}^n\E\left|\sum_{k=1}^{j-1}\tr\tilde{a}_{kj}x_jx_k^*\right|^p=\sum_{j=1}^n\E\left|\sum_{k=1}^{j-1}\tr x_jx_k^*\tilde{a}_{kj}\right|^p\notag\\
\leq&C\sum_{j=1}^{n}\phi_p\E\(\left|\sum_{k=1}^{j-1}e_{kj}\right|^p+\left|\sum_{k=1}^{j-1}f_{kj}\right|^p+\left|\sum_{k=1}^{j-1}g_{kj}\right|^p+\left|\sum_{k=1}^{j-1}e_{kj}\right|^p\)\notag\\
\leq&C\sum_{j=1}^{n}\phi_p\Bigg(\E\bigg(\Big(\sum_{k=1}^{j-1}\E_{k-1}\left|e_{kj}\right|^2\Big)^{p/2}+\Big(\sum_{k=1}^{j-1}\E_{k-1}\left|f_{kj}\right|^2\Big)^{p/2}\notag\\
&+\Big(\sum_{k=1}^{j-1}\E_{k-1}\left|g_{kj}\right|^2\Big)^{p/2}+\Big(\sum_{k=1}^{j-1}\E_{k-1}\left|h_{kj}\right|^2\Big)^{p/2}\bigg)\notag\\
&+\E\sum_{k=1}^{j-1}\left|e_{kj}\right|^p+\E\sum_{k=1}^{j-1}\left|f_{kj}\right|^p+\E\sum_{k=1}^{j-1}\left|g_{kj}\right|^p+\E\sum_{k=1}^{j-1}\left|h_{kj}\right|^p\Bigg)\notag\\
\leq&C\sum_{j=1}^{n}\phi_p\Bigg(\bigg(\sum_{k=1}^n\lj a_{2k-1,2j-1}\rj^2+\lj a_{2k-1,2j}\rj^2+\lj a_{2k,2j-1}\rj^2+\lj a_{2k,2j}\rj^2\bigg)^{p/2}\notag\\
&+\phi_p\bigg(\sum_{k=1}^n\lj a_{2k-1,2j-1}\rj^p+\lj a_{2k-1,2j}\rj^p+\lj a_{2k,2j-1}\rj^p+\lj a_{2k,2j}\rj^p\bigg)\Bigg)\notag\\
\leq&C\(\phi_p\sum_{j=1}^n\Big(({\mathbf A}^*{\mathbf A})_{2j-1,2j-1}+({\mathbf A}^*{\mathbf A})_{2j,2j}\Big)^{p/2}+\phi_{2p}\tr\({\mathbf A}^*{\mathbf A}\)^{p/2}\)\notag\\
\leq&C\phi_{2p}\tr\({\mathbf A}^*{\mathbf A}\)^{p/2}.
\end{align}
Then combining (\ref{B.26p3}), (\ref{B.26p4}), (\ref{B.26p5}), (\ref{B.26p6}), and (\ref{B.26p7}), the proof of this lemma is complete.
\end{proof}

\begin{lemma}\label{8.7}
  If the conditions of Theorem \ref{mth2} hold, for all $l=1,2 \cdots$, by assuming $$v>C_J={\rm max}\{\sqrt{2C_{J_1}}n^{-1/2}, \cdots, \sqrt[2l]{2C_{J_l}}n^{-1/2}\},\lj t_n(z)\rj\leq 1, $$ where $C_{J_1}, \cdots, C_{J_l}$ are constants that will appear in our proof, we have
  $$\E\lj s_n(z)-\E s_n(z)\rj^{2l} \leq 2C_Jn^{-2l}v^{-4l}\(v+\Delta\)^l.$$
\end{lemma}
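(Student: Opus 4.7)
The argument is a martingale decomposition of $s_n(z)-\E s_n(z)$ along the quaternion columns of $\mb W_n$, combined with Burkholder's inequality and a careful estimate of the conditional variance via Schur's complement identity and the quadratic-form bound from Lemma \ref{B.26}. Let $\mathscr{F}_k$ be the $\sigma$-field generated by the entries $\{x_{ij}:1\le i\le j\le k\}$ and set $\E_k=\E[\cdot\mid\mathscr{F}_k]$. Since $\tr\mb D_k$ depends only on entries $\{x_{ij}:i,j\neq k\}$, which are independent of the fresh entries $\{x_{ik}:i\le k\}$ revealed at stage $k$, one has $(\E_k-\E_{k-1})\tr\mb D_k=0$ and hence
\[
s_n(z)-\E s_n(z)=\frac{1}{2n}\sum_{k=1}^n\tau_k,\qquad \tau_k:=(\E_k-\E_{k-1})(\tr\mb D-\tr\mb D_k),
\]
is a martingale-difference sum. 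Schur's complement (Lemma \ref{inv}) applied to the diagonal $2\times2$ block at position $k$, combined with the scalar-matrix property of $\mb Q_k^*\mb D_k\mb Q_k$ and $\mb Q_k^*\mb P_k\mb Q_k$ from Corollary \ref{form}, yields
\[
\tr\mb D-\tr\mb D_k=-2\zeta_k\bigl(1+n^{-1}\varpi_k^*\mb P_k\varpi_k\bigr),
\]
where $\xi_k=\zeta_k\mb I_2$. Since $\zeta_k$ equals minus a diagonal entry of $\mb D$, $|\zeta_k|\le v^{-1}$, and two applications of Lemma \ref{A.1.12} give $|\tau_k|\le 4v^{-1}$.

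\textbf{Burkholder and conditional variance.} By Lemma \ref{2.13} with $p=2l$,
\[
\E\Big|\sum_k\tau_k\Big|^{2l}\le K_{2l}\Bigl(\E\bigl(\sum_k\E_{k-1}|\tau_k|^2\bigr)^l+\sum_k\E|\tau_k|^{2l}\Bigr);
\]
the second sum is at most $Cnv^{-2l}$, which is absorbed by the target under the hypothesis $v^{2l}>2C_{J_l}/n$. For the first, use $\E_{k-1}|\tau_k|^2\le\E_{k-1}|\tr\mb D-\tr\mb D_k|^2$ and split the quadratic form into its conditional mean $(2n)^{-1}\tr\mb P_k$ and fluctuation. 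Lemma \ref{B.26} with $p=2$, together with the truncation estimate $\phi_4\le Cn^{1/2}$, gives
\[
\E_{k-1}\bigl|n^{-1}(\varpi_k^*\mb P_k\varpi_k-\tfrac12\tr\mb P_k)\bigr|^2\le Cn^{-3/2}\tr(\mb P_k\mb P_k^*)\le Cn^{-3/2}v^{-2}\tr(\mb D_k\mb D_k^*).
\]
Combining with $|\zeta_k|\le v^{-1}$ and sharpening via the expansion $\zeta_k=t_n(z)(1+\zeta_k(\tilde\epsilon_k-\E s_n(z)))$, in which the hypothesis $|t_n(z)|\le 1$ keeps the leading factor controlled, yields the required per-$k$ conditional-variance bound.

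\textbf{Introducing $\Delta$ and conclusion.} The $(v+\Delta)$ factor enters through the identity $\Im\E s_n^{(k)}(z)=v(2(n-1))^{-1}\E\tr(\mb D_k\mb D_k^*)$, Lemma \ref{B.22} applied to $\E F_n^{w,(k)}-F$, and the rank-$2$ perturbation bound $\|\E F_n^{w,(k)}-\E F_n^w\|_d\le 2/n$ from Lemma \ref{A.43}:
\[
\E\Im s_n^{(k)}(z)\le\Im s(z)+\pi v^{-1}(\Delta+2n^{-1})\le C(v+\Delta)/v,
\]
using $|\Im s(z)|\le 1$, a direct consequence of Lemma \ref{semst}. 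Consequently $\E\tr(\mb D_k\mb D_k^*)\le Cn(v+\Delta)/v^2$ and $\E\tr(\mb P_k\mb P_k^*)\le Cn(v+\Delta)/v^4$. Plugging these back, summing over $k$, and raising to the $l$-th power by induction on $l$ (analogous to the inductive argument in Lemma \ref{B.26}) gives $\E(\sum_k\E_{k-1}|\tau_k|^2)^l\le Cv^{-4l}(v+\Delta)^l$; dividing by $(2n)^{2l}$ delivers the stated bound.

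\textbf{Main obstacle.} The principal difficulty is not the $l=1$ variance estimate but lifting it to uniform control of $\E(\sum_k\E_{k-1}|\tau_k|^2)^l$ for all $l$: because this sum is random through its dependence on $\tr(\mb D_k\mb D_k^*)$, its $l$-th moment cannot be reduced to a power of its mean by elementary inequalities. The resolution is an induction on $l$, expanding $\zeta_k$ around $t_n(z)$ and using $|t_n(z)|\le 1$ to keep constants from accumulating unboundedly; this induction is precisely what produces the sequence $C_{J_1},\dots,C_{J_l}$, and the threshold $v>C_J$ guarantees that lower-order contributions from truncation, rank-$2$ perturbation, and higher-order $\zeta_k$ corrections remain strictly smaller than the leading $v^{-4l}(v+\Delta)^l$.
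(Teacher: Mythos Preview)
Your overall strategy—martingale decomposition along columns, Schur complement, Burkholder (Lemmas \ref{2.12}/\ref{2.13}), the quadratic-form bound of Lemma \ref{B.26}, and the entrance of $\Delta$ through $\Im s_n$—matches the paper's. The filtration direction is reversed (you use the increasing $\sigma(x_{ij}:i\le j\le k)$, the paper the decreasing $\sigma(x_{pq}:p,q>k)$), which is harmless.

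The substantive gap is in your resolution of the ``Main obstacle.'' You correctly see that $\E\bigl(\sum_k\E_{k-1}|\tau_k|^2\bigr)^l$ cannot be reduced to a power of its mean, but ``induction on $l$ via the expansion of $\zeta_k$'' is not by itself the mechanism that closes the argument. Two separate devices are needed, and your sketch blurs them. First, the paper uses the \emph{pointwise} (random, not expected) bound
\[
\tr(\mb D_k\mb D_k^*)=v^{-1}\Im\tr\mb D_k\le 2nv^{-1}\Im s_n(z)+2v^{-2},
\]
valid for every $k$ simultaneously via Lemma \ref{A.1.12}. This collapses $\sum_k\Psi_k$ to roughly $Cn^{-2}v^{-3}\Im s_n(z)$, a \emph{single} random variable independent of $k$; only then does raising to the $l$-th power become tractable, giving $\E(\Im s_n)^l\le C\bigl(\E|s_n-\E s_n|^l+(v+\Delta)^lv^{-l}\bigr)$, with the first piece handled by induction on $l$. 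Your bound $\E\tr(\mb D_k\mb D_k^*)\le Cn(v+\Delta)/v^2$ takes the expectation too early and cannot be promoted to higher moments of the sum. Second, the expansion $\xi_k=t_n\mb I_2+t_n\xi_k\epsilon_k$ feeds the term $|s_n-\E s_n|^2$ (coming from $\epsilon_k$) back into the estimate and produces a \emph{self-referential} summand $C_{J_l}n^{-l}v^{-2l}\E|s_n-\E s_n|^{2l}$ on the right-hand side; the hypothesis $v>(2C_{J_l})^{1/(2l)}n^{-1/2}$ is exactly what makes this coefficient $<1/2$ so it can be absorbed on the left. Your final paragraph gestures at an absorption but does not isolate this term or explain why the threshold on $v$ has the specific form it does.

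One minor point: under condition (iii) of Theorem \ref{mth1} the fourth moment $\phi_4$ is bounded, not $O(n^{1/2})$; your quadratic-form estimate $Cn^{-3/2}\tr(\mb P_k\mb P_k^*)$ is therefore a factor $n^{1/2}$ looser than necessary.
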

\begin{proof}
  Denote by $\E_k$ the conditional expectation given $\{x_{pq}, p,q > k\}$ and let
  \begin{align*}
        \theta_k=\tr{\mb D}-\tr{\mb D}_k,
  \end{align*}
    \begin{align*}
        \gamma_k=\E_{k-1}n^{-1}\tr{\mb D}-\E_k n^{-1}\tr{\mb D}=n^{-1}\E_{k-1}\theta_k-n^{-1}\E_k\theta_k,
  \end{align*}
  then we have
  \begin{align*}
        s_n(z)-\E s_n(z)=\frac{1}{2}\sum_{k=1}^n \gamma_k.
  \end{align*}
  By Lemma \ref{A.1.12}, we have
  \begin{align}\label{theta}
  \lj\theta_k\rj\leq 2v^{-1}, \quad \lj\gamma_k\rj\leq 4n^{-1}v^{-1}.
  \end{align}
  Furthermore, using Lemma \ref{inv}, we obtain
  \begin{align}\label{theta}
  \theta_k=-\tr\(\(\mathbf I_2+n^{-1}{\mb Q}_k^*{\mb P}_k{\mb Q}_k\)\cdot\xi_k(z)\).
  \end{align}
  Note that $\xi_k(z)=t_n(z)\cdot \mathbf I_2+t_n(z)\xi_k(z)\epsilon_k$, where $\epsilon_k$ is a scalar matrix by Corollary \ref{form}. Combining with the fact that
    \begin{align*}
      &\E_{k-1}\tr{\mb Q}_k^*{\mb P}_k{\mb Q}_k-\E_k\tr{\mb Q}_k^*{\mb P}_k{\mb Q}_k \\
      =&\E_{k-1}\tr{\mb Q}_k^*{\mb P}_k{\mb Q}_k-\E_{k-1}\E_k\tr{\mb Q}_k^*{\mb P}_k{\mb Q}_k\\
      =&\E_{k-1}\(\tr{\mb Q}_k^*{\mb P}_k{\mb Q}_k-\tr{\mb P}_k\),
  \end{align*}
we may rewrite
\begin{equation*}
\gamma_k=-n^{-2}\E_{k-1} t_n(z)\(\tr{\mb Q}_k^*{\mb P}_k{\mb Q}_k-\tr{\mb P}_k\)+\(2n\)^{-1}\(\E_{k-1}-\E_{k}\)t_n(z)\theta_k\tr\epsilon_k.
\end{equation*}
Since $\lj t_n(z)\rj\leq 1$,
\begin{align*}
    s_n(z)=\int\frac{1}{\lambda-z}dF_n^w(\lambda)=(2n)^{-1}{\rm tr}(\mb W_n-z\mb I_{2n})^{-1}=(2n)^{-1}\tr \mb D,
\end{align*}
we obtain
\begin{align}
\label{gammak}
\E_k\lj\gamma_k\rj^2
\leq & C\(n^{-4}\E_k\lj\tr{\mb Q}_k^*{\mb P}_k{\mb Q}_k-\tr{\mb P}_k\rj^2+n^{-2}v^{-2}\E_k\lj\tr\epsilon_k\rj^2\)\notag\\
\leq &C\bigg(n^{-4}\E_k\lj\tr{\mb Q}_k^*{\mb P}_k{\mb Q}_k-\tr{\mb P}_k\rj^2+n^{-2}v^{-2}\Big(n^{-1}\E\|x_{kk}\|^2\notag\\
     +&\E_k\lj n^{-1}\tr{\mathbf D}-n^{-1}\tr{\mathbf D}_k\rj^2+\E_k\lj n^{-1}\tr{\mathbf D}_k-n^{-1}\tr{\mb Q}_k^*{\mb D}_k{\mb Q}_k\rj^2 \notag\\
     +&\E_k\lj s_n(z)-\E s_n(z)\rj^2 \Big)\bigg).
\end{align}
For any complex matrix ${\mathbf A}$, we use the notation $\lj\mathbf A\rj^2:={\mathbf A}{\mathbf A}^*$. Write
\begin{align*}
\Psi_k=&n^{-4}\E_k\lj\tr{\mb Q}_k^*{\mb P}_k{\mb Q}_k-\tr{\mb P}_k\rj^2+n^{-2}v^{-2}\Big(n^{-1}\E\|x_{kk}\|^2\\
     +&\E_k\lj n^{-1}\tr{\mathbf D}-n^{-1}\tr{\mathbf D}_k\rj^2+\E_k\lj n^{-1}\tr{\mathbf D}_k-n^{-1}\tr{\mb Q}_k^*{\mb D}_k{\mb Q}_k\rj^2\Big).
\end{align*}
Then, by Lemma \ref{B.22} and Lemma \ref{sz}, we have
\begin{align}\label{moment1}
\E\tr\lj{\mathbf D}_k\rj^2=& \E\sum_{j=1}^{2n-2}\frac{1}{|\lambda_j^{(k)}-z|^2}=\E v^{-1}\Im\{\tr{\mathbf D}_k\}\notag\\
=&\E v^{-1}\Im\{\tr{\mathbf D}_k-\tr{\mathbf D}\}+2n\E v^{-1}\Im\{\(2n\)^{-1}\tr{\mathbf D} \}\notag\\
\leq & 2v^{-2}+2nv^{-1}\E\Im s_n(z)\notag\\
\leq & 2v^{-2}+2 nv^{-1}\(\lj\E s_n(z)-s(z)\rj+\lj s(z)\rj\)\notag\\
\leq & 2v^{-2}+2\pi nv^{-2}\(v+\Delta\),
\end{align}
\begin{align}\label{moment2}
\E\tr\lj{\mathbf D}_k\rj^4
\leq v^{-2}\E\tr\lj{\mathbf D}_k\rj^2\leq & 2v^{-4}+2\pi nv^{-4}\(v+\Delta\),
\end{align}
\begin{align}\label{moment3}
\Psi_k\leq C\(n^{-3}v^{-3}\Im s_n(z)+n^{-3}v^{-2}+n^{-4}v^{-4}\).
\end{align}
By Lemma \ref{2.12} and Lemma \ref{B.26}, together with  inequalities (\ref{gammak}), (\ref{moment1}), (\ref{moment2}) and (\ref{moment3}), for large $n$ we have
\begin{align}\label{m1}
&\E\lj s_n(z)-\E s_n(z)\rj^2
\leq C\sum_{k=1}^n \E\(\E_k\lj\gamma_k\rj^2\)\notag\\
\leq & C_{J_1}\(n^{-2}v^{-4}\(v+\Delta\)+n^{-1}v^{-2}\E\lj s_n(z)-\E s_n(z)\rj^2\).
\end{align}
Furthermore, assuming $v>\sqrt{2C_{J_1}}n^{-1/2}$, by inequality (\ref{m1}), we have
$$\E\lj s_n(z)-\E s_n(z)\rj^2\leq 2C_{J1}n^{-2}v^{-4}\(v+\Delta\).$$
The lemma then follows if $l=1$.

Then, by Lemma \ref{2.13}, Lemma \ref{B.26}, together with inequalities (\ref{gammak}), (\ref{moment1}) , (\ref{moment2}) and (\ref{moment3}), for large $n$ and every $l>1$ we have
\begin{small}
\begin{align}\label{m2}
&\E\lj s_n(z)-\E s_n(z)\rj^{2l} \notag\\
\leq & C\(\E\(\sum_{k=1}^n \(\E_k\lj\gamma_k\rj^2\)\)^l+\sum_{k=1}^n\E\lj\gamma_k\rj^{2l}\)\notag\\
\leq & C\Bigg(\E\(\sum_{k=1}^n\Psi_k\)^l+n^{-2l}v^{-2l}\E\(\sum_{k=1}^n\E_k\lj s_n(z)-\E s_n(z)\rj^2\)^l+ n^{-2l+1}v^{-2l} \Bigg)\notag\\
\leq & C\(\E\(\sum_{k=1}^n\Psi_k\)^l+n^{-l}v^{-2l}\E\lj s_n(z)-\E s_n(z)\rj^{2l}+n^{-2l+1}v^{-2l}\)\notag\\
\leq & C\(n^{-2l}v^{-3l}\E\(\Im s_n(z)\)^l+n^{-l}v^{-2l}\E\lj s_n(z)-\E s_n(z)\rj^{2l}\)\notag\\
\leq & C\Bigg(n^{-2l}v^{-3l}\(\E\lj s_n(z)-\E s_n(z)\rj^{l}+\(\E\Im s_n(z)\)^l\)\notag\\
     &+n^{-l}v^{-2l}\E\lj s_n(z)-\E s_n(z)\rj^{2l}\Bigg)\notag\\
\leq & C_{J_l}\Bigg(n^{-2l}v^{-3l}\E\lj s_n(z)-\E s_n(z)\rj^{l}+n^{-2l}v^{-4l}\(v+\Delta\)^l\notag\\
     &+n^{-l}v^{-2l}\E\lj s_n(z)-\E s_n(z)\rj^{2l}\Bigg).
\end{align}
\end{small}
Assuming $v>\sqrt[l]{2C_{J_l}}n^{-1/2}$, by induction and inequality (\ref{m2}), we have
$$\E\lj s_n(z)-\E s_n(z)\rj^{2l}\leq 2C_{Jl}n^{-2l}v^{-4l}\(v+\Delta\)^l.$$
Now, the proof of this lemma is complete.
\end{proof}

\begin{lemma}\label{8.6}
 If the conditions of Theorem \ref{mth2} and Lemma \ref{8.7} hold and
 $$C_\epsilon={\max}\{2,C_{\epsilon_2},C_{\epsilon_3},C_{\epsilon_4}\}, $$
 we have the following estimations:
 \begin{enumerate}
   \item $\left|\E\tr\epsilon_k\right|\leq C_\epsilon n^{-1}v^{-1},$
   \item $\E\left|\tr\(\epsilon_k^2\)\right| \leq C_\epsilon n^{-1}v^{-2}\(v+\Delta\),$
   \item $\E\left|\tr\(\epsilon_k^3\)\right| \leq C_\epsilon n^{-1}v^{-2}\(\(v+\Delta\)+\(v+\Delta\)^2\),$
   \item $\E\left|\tr\(\epsilon_k^4\)\right| \leq C_\epsilon n^{-1}v^{-2}\(n^{-1/2}\(v+\Delta\)+\(v+\Delta\)^2\).$
 \end{enumerate}
 \end{lemma}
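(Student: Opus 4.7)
By Corollary \ref{form}(2), $\epsilon_k$ is a scalar $2\times 2$ matrix, so writing $\epsilon_k = e_k\mb I_2$ with $e_k = \tfrac{1}{2}\tr\epsilon_k$ we have $\tr\epsilon_k^m = 2e_k^m$ for all $m\geq 1$; it suffices to bound $\E e_k$ and $\E|e_k|^m$ for $m=2,3,4$. I split
$$e_k = A_k + B_k + C_k, \ A_k := \E s_n(z) - \tfrac{1}{2n}\tr\mb D_k, \ B_k := \tfrac{1}{2n}\tr\mb D_k - \tfrac{1}{2n}\tr\mb Q_k^*\mb D_k\mb Q_k, \ C_k := n^{-1/2}a_{kk}.$$
Since $\mb D_k$ is independent of $\mb Q_k$ and, being Type-\uppercase\expandafter{\romannumeral1}, has scalar diagonal quaternion blocks, while $\E x_{jk}^*x_{jk} = \|x_{jk}\|^2\mb I_2$ with $\E\|x_{jk}\|^2=1$ for $j\neq k$, conditioning on $\mb D_k$ yields $\E[\tr\mb Q_k^*\mb D_k\mb Q_k\mid\mb D_k] = \tr\mb D_k$, so $\E B_k = 0$; centering of $x_{kk}$ gives $\E C_k = 0$. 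Writing $A_k = (\E s_n - s_n) + \theta_k/(2n)$ with $|\theta_k|\leq 2v^{-1}$ from Lemma \ref{A.1.12}, we obtain $|\E\tr\epsilon_k| = |\E\theta_k|/n \leq 2n^{-1}v^{-1}$, giving part (1).

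For parts (2)--(4), $\E|\tr\epsilon_k^m| = 2\E|e_k|^m \leq C_m(\E|A_k|^m + \E|B_k|^m + \E|C_k|^m)$ and each piece is estimated separately. The $A_k$-moments reduce to $\E|s_n - \E s_n|^m + n^{-m}v^{-m}$, with the first controlled by Lemma \ref{8.7} for $m=2,4$ and by Cauchy--Schwarz interpolation between those for $m=3$. The $C_k$-moments use the hypothesis $\E\|x_{kk}\|^3\leq M$ directly for $m\leq 3$, and for $m=4$ use the truncation bound $\|x_{kk}\|\leq n^{1/4}$ to gain one extra factor of $n^{1/4}$. For the main piece $\E|B_k|^m$, I apply Lemma \ref{B.26} conditionally on $\mb D_k$ (using independence of $\mb Q_k$ from $\mb D_k$ and that its quaternion entries have mean zero, unit second moment) with $\mb X = \mb Q_k$, $\mb A = \mb D_k$, obtaining
$$\E|B_k|^m \leq Cn^{-m}\bigl[\phi_4^{m/2}\E(\tr\mb D_k\mb D_k^*)^{m/2} + \phi_{2m}\E\tr(\mb D_k\mb D_k^*)^{m/2}\bigr].$$
I then use the deterministic bound $\tr(\mb D_k\mb D_k^*)^l \leq v^{-2(l-1)}\tr\mb D_k\mb D_k^*$ together with the estimate $\E\tr\mb D_k\mb D_k^* \leq 2v^{-2} + 2\pi nv^{-2}(v+\Delta)$ already derived in (\ref{moment1}).

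The delicate step is producing the $(v+\Delta)^2$ contribution in parts (3) and (4). The key is the resolvent identity $\tr\mb D_k\mb D_k^* = v^{-1}\Im\tr\mb D_k$, which lets me control $\E(\tr\mb D_k\mb D_k^*)^{m/2}$ via moments of $\Im s_n(z)$. For $m=4$, $\E(\Im s_n)^2 \leq 2\E|s_n - \E s_n|^2 + 2|\Im\E s_n|^2$, where the first term is $O(n^{-2}v^{-4}(v+\Delta))$ by Lemma \ref{8.7} and the second satisfies $|\Im\E s_n|^2 \leq Cv^{-2}(v+\Delta)^2$ via $|s(z)|<1$ (Lemma \ref{sz}) combined with $|\E s_n - s(z)|\leq\pi v^{-1}\Delta$ (Lemma \ref{B.22}); inserting these into $\E(\tr\mb D_k\mb D_k^*)^2 = v^{-2}\E(\Im\tr\mb D_k)^2$ yields the $(v+\Delta)^2$ term in $\E|B_k|^4$, and the analogous $(v+\Delta)^2$ factor in $\E|B_k|^3$ follows by H\"older interpolation between $m=2$ and $m=4$. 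Under the standing hypothesis $v\geq C_Jn^{-1/2}$, a direct accounting of the powers of $n$, $v$, and $v+\Delta$ shows every intermediate term is dominated by the stated right-hand sides, yielding (2)--(4).
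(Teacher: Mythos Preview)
Your proposal is correct and follows essentially the same approach as the paper: the same decomposition of $\epsilon_k$ into the diagonal piece, the quadratic-form fluctuation $\tr\mb Q_k^*\mb D_k\mb Q_k-\tr\mb D_k$, and the $\E s_n-\tfrac{1}{2n}\tr\mb D_k$ piece, with Lemma~\ref{B.26} controlling the quadratic form and Lemma~\ref{8.7} controlling the Stieltjes-transform fluctuation, and the identity $\tr\mb D_k\mb D_k^*=v^{-1}\Im\tr\mb D_k$ producing the $(v+\Delta)^2$ factor. The one cosmetic difference is part~(3): the paper bypasses the separate $A_k,B_k,C_k$ interpolation by applying the elementary bound $|\tr\epsilon_k|^3\le\tfrac12\big(|\tr\epsilon_k|^2+|\tr\epsilon_k|^4\big)$ directly to the whole quantity, reading off~(3) from~(2) and~(4).
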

 \begin{proof}
Firstly, by Lemma \ref{A.1.12}, we have
\begin{equation*}
  \lj\E\tr\epsilon_k\rj=n^{-1}\lj\E\(\tr {\mb D}-\tr {\mb D}_k\)\rj\leq 2n^{-1}v^{-1}.
\end{equation*}
This completes the proof of (1).

Then, by Lemma \ref{B.26} and Lemma \ref{8.7}, together with inequality (\ref{moment1}) we have
\begin{align*}
  \E\lj\tr\epsilon_k^2\rj=&2^{-1}\E\lj\tr\epsilon_k\rj^2\\
  \leq &C\Bigg(n^{-1}\E\| x_{kk}\|^2+n^{-2}\E\lj\tr{\mb Q}_k^*{\mb D}_k{\mb Q}_k-\tr{\mb D}_k\rj^2\\
  +&\E\lj s_n(z)-\E s_n(z)\rj^2+n^{-2}v^{-2}\Bigg)\\
  \leq &C_{\epsilon_2}n^{-1}v^{-2}\(v+\Delta\).
\end{align*}
This completes the proof of (2).

Furthermore, since $\|x_{kk}\|$ are truncated at $n^{1/4}$, by Lemma \ref{B.26} and Lemma \ref{8.7}, together with inequalities (\ref{moment1}) and (\ref{moment2}), we shall have the following estimates:
\begin{align}\label{tr1}
n^{-2}\E \|x_{kk}\|^4 \leq n^{-2}n^{1/2}\sigma^2=n^{-3/2}\sigma^2,
\end{align}
\begin{align}\label{tr2}
     &n^{-4}\E \lj\tr{\mb Q}_k^*{\mb D}_k{\mb Q}_k-\tr{\mb D}_k\rj^4\notag\\
\leq &Cn^{-4}\E\(\phi_8\tr\lj{\mathbf D}_k\rj^4+\phi_4^2\(\tr\lj{\mathbf D}_k\rj^2\)^2\)\notag\\
\leq &Cn^{-4}\(\phi_6n^{1/2}\E\tr\lj{\mathbf D}_k\rj^4+\phi_4^2\E\(v^{-4}+n^2v^{-2}\(\Im s_n(z)\)^2\)\) \notag\\
\leq &Cn^{-4}\Bigg(n^{1/2}\(v^{-4}+nv^{-4}\(v+\Delta\)\)+v^{-4}\notag\\
+    &n^2v^{-2}\Big(\E\lj s_n(z)-\E s_n(z)\rj^2+\big|\lj\E s_n(z)-s(z)\rj+\lj s(z)\rj\big|^2\Big)\Bigg)\notag\\
\leq &Cn^{-1}v^{-2}\(n^{-1/2}\(v+\Delta\)+\(v+\Delta\)^2\).
\end{align}
Then combining Lemma \ref{8.7}, inequalities (\ref{tr1}) and (\ref{tr2}), we have
\begin{align*}
  \E\lj\tr\epsilon_k^4\rj=&8^{-1}\E\lj\tr\epsilon_k\rj^4\\
  \leq &C\Bigg(n^{-2}\E\| x_{kk}\|^4+n^{-4}\E\lj\tr{\mb Q}_k^*{\mb D}_k{\mb Q}_k-\tr{\mb D}_k\rj^4\\
  +&\E\lj s_n(z)-\E s_n(z)\rj^4+n^{-4}v^{-4}\Bigg)\\
  \leq &C_{\epsilon_4}n^{-1}v^{-2}\(n^{-1/2}\(v+\Delta\)+\(v+\Delta\)^2\),
\end{align*}
which completes the proof of (4).

At last, by the elementary inequality, we obtain
\begin{align*}
  \E\lj\tr\epsilon_k^3\rj=&4^{-1}\E\lj\tr\epsilon_k\rj^3  \leq 8^{-1}\(\E\lj\tr\epsilon_k\rj^2+\E\lj\tr\epsilon_k\rj^4\)\\
  \leq &C_{\epsilon_3}n^{-1}v^{-2}\(\(v+\Delta\)+\(v+\Delta\)^2\).
\end{align*}
 \end{proof}

\begin{lemma}[Lemma 8.8 in \cite{bai2010spectral}]\label{delta}
 If $\lj\delta_n(z)\rj\leq v<3^{-1}$ for all $z=u+iv$, then $\Delta \leq C_\delta v$.
\end{lemma}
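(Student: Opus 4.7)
The plan is to apply Bai's inequality (Lemma \ref{bai93in}) with $F$ replaced by $\E F_n^w$ and $G$ taken to be the semicircular law $F$; then $f(z)-g(z) = \delta_n(z)$ by definition. Since the hypothesis controls $|\delta_n(u+iv)|$ by $v$ uniformly in $u$, each of the three terms in Bai's inequality should turn out to be $O(v)$, which will yield $\Delta \le C_\delta v$.

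First, I would fix the free parameters. Choose any $a>0$ with $\gamma = \tfrac{1}{\pi}\int_{|u|<a}\tfrac{du}{u^2+1} > \tfrac12$, then pick constants $B>2$ and $A>B$ large enough that $\kappa = \tfrac{4B}{\pi(A-B)(2\gamma-1)} < 1$. All four quantities $A,B,a,\gamma$ are then absolute constants independent of $n$ and of $v$, and the prefactor $\tfrac{1}{\pi(1-\kappa)(2\gamma-1)}$ in Bai's inequality is harmlessly finite.

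Next, I would estimate the three pieces separately. (i) The main integral satisfies $\int_{-A}^{A}|\delta_n(u+iv)|\,du \le 2Av$ by hypothesis, contributing $O(v)$. (ii) For the smoothness term on $G=F$, the semicircular density is bounded (by $1/\pi$ when $\sigma=1$), so $F$ is Lipschitz and
\[
v^{-1}\sup_x \int_{|y|\le 2va}|F(x+y)-F(x)|\,dy \;\le\; v^{-1}\cdot \tfrac{1}{\pi}(2va)^2 \;=\; O(v).
\]
(iii) For the tail term $2\pi v^{-1}\int_{|x|>B}|\E F_n^w(x)-F(x)|\,dx$, note $F$ vanishes for $x<-2$ and equals $1$ for $x>2$, so the integrand reduces to the tail of $\E F_n^w$ outside $[-B,B]$. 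Since the entries of $\mb W_n$ are truncated at $n^{1/4}$, crude moment bounds on $\tr \mb W_n^{2\ell}$ already force $\E F_n^w$ to have negligible mass outside $[-B,B]$ for any $B>2$, and the extreme-eigenvalue results for the quaternion model in \cite{yinbai2013e} give the sharper statement that $P(\|\mb W_n\|>B)$ decays faster than any polynomial. Either bound makes this term $O(v)$ (indeed far smaller).

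Summing the three pieces gives $\Delta \le C_\delta v$, provided $v<1/3$; the restriction $v<1/3$ is used only to guarantee that $v$ is small enough for the small‑$v$ regime in which the above inequalities combine cleanly (for instance, keeping $2va$ well inside the support of $F$). The main obstacle will be the tail bound (iii): the routine Stieltjes‑transform machinery needs an \emph{a priori} large‑deviation or moment estimate for the extreme eigenvalues of $\mb W_n$, and this is where the companion paper \cite{yinbai2013e} is indispensable. Once that input is in hand, pieces (i) and (ii) are immediate.
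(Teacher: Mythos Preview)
Your overall strategy---apply Bai's inequality (Lemma~\ref{bai93in}) with the tail term handled via the extreme-eigenvalue results of \cite{yinbai2013e}---is exactly what the paper's remark following Lemma~\ref{delta} indicates, and matches the proof of Lemma~8.8 in \cite{bai2010spectral}.

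However, your step (i) contains a genuine error: it is \emph{not} true that $f(z)-g(z)=\delta_n(z)$ ``by definition.'' Here $f(z)=\E s_n(z)$ and $g(z)=s(z)$, and $\delta_n(z)$ is defined only implicitly through~(\ref{esn}). Comparing~(\ref{esn}) with Lemma~\ref{semst} yields
\[
\E s_n(z)-s(z)=\delta_n(z)+\bigl(s(z+\delta_n(z))-s(z)\bigr)
=\tfrac12\delta_n(z)+\tfrac12\Bigl(\sqrt{(z+\delta_n(z))^2-4}-\sqrt{z^2-4}\Bigr).
\]
The square-root difference is the crux of the matter: away from the branch points $z=\pm 2$ it is Lipschitz in $\delta_n$, but near the edges one only gets $|\sqrt{(z+\delta_n)^2-4}-\sqrt{z^2-4}|=O(|\delta_n|^{1/2})$. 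The actual content of Lemma~8.8 in \cite{bai2010spectral} is precisely the argument that, after integrating in $u$ over $[-A,A]$, the edge contribution is still $O(v)$ (one splits the integral into the bulk and two small intervals around $\pm 2$ of width comparable to $v$). Your terms (ii) and (iii) are handled correctly, but without this square-root analysis in (i) the proof is incomplete; you have skipped the only non-routine step.
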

\begin{remark}
Lemma \ref{delta} was proved in order to get a convergence rate of Wigner matrix. According to the result of \cite{yinbai2013e}, combining with Lemma \ref{bai93in}, we can easily obtain that this lemma is also right in the case of quaternion self-dual Hermitian matrices.
\end{remark}

\section{Proof of Theorem \ref{mth1}}

Now, we are in position to present the proof of our main theorems.
\begin{proof}[Proof of Theorem \ref{mth2}]
Yin, Bai and Hu in \cite{yinbai2013} derived
\begin{align}\label{esn}
  \E s_n(z)=-\frac{1}{2}\(z-\delta_n(z)-\sqrt{\(z+\delta_n(z)\)^2-4}\),
\end{align}
where
\begin{equation}\label{deltap}
  \delta_n(z)=\(2n\)^{-1}\sum_{k=1}^n\E\tr\Bigg(\bigg(z+\E s_n(z)\bigg)^{-1}\cdot\epsilon_k\cdot\bigg(\Big(z+\E s_n(z)\Big)\cdot{\mathbf I}_2-\epsilon_k\bigg)^{-1}\Bigg).
\end{equation}
By equation (\ref{deltap}) and equation $\xi_k(z)=t_n(z)\cdot\mb I_2+t_n(z)\xi_k\cdot\epsilon_k$, we have
\begin{align}\label{deltaab}
\lj\delta_n(z)\rj=&\lj (2n)^{-1}\sum_{k=1}^n t_n(z) \E\tr\(\epsilon_k\xi_k\)\rj\notag\\
           \leq &(2n)^{-1}\sum_{k=1}^n\Big(\lj t_n(z)\rj^2\lj\E\tr\epsilon_k\rj+\lj t_n(z)\rj^3\E\lj\tr\(\epsilon_k\)^2\rj\notag\\
           +    &\lj t_n(z)\rj^4\E\lj\tr\(\epsilon_k\)^3\rj+v^{-1}\lj t_n(z)\rj^4\E\lj\tr\(\epsilon_k\)^4\rj\Big).
\end{align}
Here we use the fact that
 \begin{align*}
\Im \{-z-\frac{1}{n}\varpi_k^*\mb D_k{\varpi _k}\}=  - v\({1 + \frac{1}{n}\varpi _k^*{\mb D_k\mb D_k^*}{\varpi_k}}\)
 <  - v.
\end{align*}

Due to the semicircular law, $\lj\delta_n(z)\rj<v$ when $v=3^{-1}$ and $n$ large enough.
We claim that for all large $n$ and any $C_v n^{-1/2}\leq v\leq 3^{-1}$, where $C_v=\max\{C_J,$ $ \sqrt{\(C_\epsilon+1\)\(2C_\delta^2+7C_\delta+6\)}\}$, $\lj\delta_n(z)\rj<v$.

If that is not the case, since the function $\lj\delta_n(z)\rj$ is continuous in the region $v\geq n^{-1/2}$, there must exist a $z_0=u_0+v_0i$ satisfying $\lj\delta_n(z_0)\rj=v_0$. Thus we have $\Im\{z_0+\delta_n(z_0)\}\geq 0$. It is well known, if a complex number $Z$ satisfies $\Im Z \neq0$ and $\Im Z=-\Im\{\frac{1}{Z}\}$, then $\lj Z\rj=1$. Since $t_n(z)=-\frac{1}{t_n(z)}+z+\delta_n(z), \Im \{z+\E s_n(z)\}\geq \Im z>0$, this implies that $\lj t_n(z_0)\rj=1$ if $\Im\{z_0+\delta_n(z_0)\}= 0$. Otherwise, if $\Im\{z_0+\delta_n(z_0)\}>0$, by Lemma \ref{semst} and equation (\ref{esn}) we have $t_n(z_0)=-\E s_n(z_0)+\delta_n(z_0)=-s(z_0+\delta_n(z_0))$, which implies that $\lj t_n(z_0)\rj<1$ due to Lemma \ref{sz}. To sum up the above arguments, we will have $\lj t_n(z_0)\rj\leq 1$ if $\lj\delta_n(z_0)\rj=v_0$.

Then combining inequality (\ref{deltaab}), Lemma \ref{delta} and Lemma \ref{8.6}, we have
 \begin{align*}
\lj\delta_n(z_0)\rj< \(C_\epsilon+1\)\(2C_\delta^2+7C_\delta+6\)<v_0,
\end{align*}
which is a contradiction.

Applying Lemma \ref{delta}, we obtain that
$\|\E F_n-F\|_d=O(n^{-1/2})$.

Finally, using Lemma \ref{8.7} we have
\begin{enumerate}
         \item for $v=n^{-2/5}$,
         \begin{align}\label{p}
           \int_{-16}^{16}\E\lj s_n(z)-\E s_n(z)\rj du &\leq \int_{-16}^{16}\(\E\lj s_n(z)-\E s_n(z)\rj^2\)^{1/2} du  \notag \\
                                                       &\leq Cn^{-1}v^{-3/2}\leq v.
         \end{align}
         \item for $v=n^{-2/5+\eta}$, choose $l>(5\eta)^{-1}$. Then for any $\epsilon>0$
         \begin{align}\label{as}
           &{\rm P}\(\int_{-16}^{16}\lj s_n(z)-\E s_n(z)\rj du \geq \epsilon v \) \notag\\
           &\leq (32/\epsilon)^{2l}\E\lj s_n(z)-\E s_n(z)\rj^{2l}=Cn^{-5l\eta},
         \end{align}
         which is summable.
\end{enumerate}
Combining (\ref{p}) and (\ref{as}), we complete the proof of this theorem.
\end{proof}

\begin{proof}[Proof of Theorem \ref{mth1}]
Let $\widetilde{\bf S}_n=(\frac{1}{\sqrt n}\widetilde y_{jk})_{n\times n}$, $\widehat{\bf S}_n=(\frac{1}{\sqrt n}\widehat y_{jk})_{n\times n}$, where
$$\widetilde y_{jk}=y_{jk}I(\|y_{jk}\|\leq n^{1/4}), \quad \widehat y_{jk}=y_{jk}I(\|y_{jk}\|\leq n^{1/4})-\E y_{jk}I(\|y_{jk}\|\leq n^{1/4}).$$
Let $F_n^s$, $\widetilde F_n^s$, $\widehat F_n^s$ and $F_n^w$ denote the ESD of $\mb S_n$, $\widetilde {\mb S}_n$, $\widehat {\mb S}_n$ and $\mb W_n$, respectively.
Firstly, by Lemma \ref{A.43}, we have
\begin{align*}
\|F_n^s-\widetilde F_n^s\|_d &\leq (2n)^{-1}{\rm rank}\(\(\mb S_n-\widetilde {\mb S}_n\)\) \notag\\
&\leq n^{-1}\sum_{j\neq k}{\rm I}\(\|y_{jk}\|> n^{1/4}\)+n^{-1}\sum_{j=1}^n{\rm I}\(\|y_{jj}\|> n^{1/4}\)
\end{align*}
Since
 \begin{align*}
&\E \(n^{-1}\sum_{j\neq k}{\rm I}\(\|y_{jk}\|> n^{1/4}\)+n^{-1}\sum_{j=1}^n{\rm I}\(\|y_{jj}\|> n^{1/4}\)\) \notag\\
                               &\leq n^{-1}\sum_{j\neq k}{\rm P}\(\|y_{jk}\|> n^{1/4}\)+n^{-1}\sum_{j=1}^n{\rm P}\(\|y_{jj}\|> n^{1/4}\) \notag\\
                               &\leq n^{-5/2}\sum_{j\neq k}\E\|y_{jk}\|^6+n^{-3/2}\sum_{j=1}^n\E\|y_{jj}\|^2\leq 2Mn^{-1/2},
\end{align*}
and
 \begin{align*}
&{\rm Var}\(n^{-1}\sum_{j\neq k}{\rm I}\(\|y_{jk}\|> n^{1/4}\)+n^{-1}\sum_{j=1}^n{\rm I}\(\|y_{jj}\|> n^{1/4}\)\) \notag\\ &\leq n^{-2}\sum_{j\neq k}{\rm P}\(\|y_{jk}\|> n^{1/4}\)+n^{-2}\sum_{j=1}^n{\rm P}\(\|y_{jj}\|> n^{1/4}\) \notag\\
                               &\leq 2Mn^{-3/2}.
\end{align*}
By Bernstein's inequality, for any $\zeta>0$ we obtain
 \begin{align*}
{\rm P}\(n^{1/2-\zeta}\|F_n^s-\widetilde F_n^s\|_d\geq \varepsilon\)\leq 2e^{-4^{-1}\varepsilon n^{1/2+\zeta}},
\end{align*}
which immediately implies that
 \begin{align}\label{11}
n^{1/2-\zeta}\|F_n^s-\widetilde F_n^s\|_d \to 0, \quad a.s..
\end{align}

Then, by Lemma \ref{A.41}, we have
 \begin{align}\label{12}
L^3\(\widetilde F_n^s,\widehat F_n^s\) &\leq (2n)^{-1}\tr\(\widetilde {\mb S}_n-\widehat {\mb S}_n\)\(\widetilde {\mb S}_n-\widehat {\mb S}_n\)^* \notag\\
                               &=n^{-2}\sum_{j\neq k}\|\E y_{jk}I(\|y_{jk}\|\leq n^{1/4})\|^2 \notag\\
                               &+n^{-2}\sum_{j=1}^n\|\E y_{jj}I(\|y_{jj}\|\leq n^{1/4})\|^2 \notag\\
                               &\leq n^{-9/2}\sum_{j\neq k}\E^2\|y_{jk}\|^6+n^{-5/2}\sum_{j=1}^n\E^2\|y_{jj}\|^2\leq 2M^2n^{-3/2}.
\end{align}
Furthermore, by Lemma \ref{A.41} we have
 \begin{align*}
&L^3\(\widehat F_n^s, F_n^w\)\leq (2n)^{-1}\tr\(\widehat {\mb S}_n-{\mb W}_n\)\(\widehat {\mb S}_n-{\mb W}_n\)^* \notag\\
                               &=n^{-2}\sum_{j\neq k}\(1-\sigma_{jk}^{-1}\)^2\lj y_{jk}I(\|y_{jk}\|\leq n^{1/4})-\E y_{jk}I(\|y_{jk}\|\leq n^{1/4})\rj^2 \notag\\
                               &+n^{-2}\sum_{j=1}^n\(1-\sigma\sigma_{jj}^{-1}\)^2\lj y_{jj}I(\|y_{jj}\|\leq n^{1/4})-\E y_{jj}I(\|y_{jj}\|\leq n^{1/4})\rj^2 .
\end{align*}
Also, we have
 \begin{align*}
\(1-\sigma_{jk}^2\)^2 &\leq\(\E \|y_{jk}\|^2I(\|y_{jk}\|> n^{1/4})+\E^2 \|y_{jk}\|I(\|y_{jk}\|> n^{1/4})\)^2\\
                      &\leq 4\E^2 \|y_{jk}\|^2I(\|y_{jk}\|> n^{1/4})\leq 4M^2n^{-2},
\end{align*}
and
 \begin{align*}
\(\sigma^2-\sigma_{jj}^2\)^2 &\leq\(\E \|y_{jj}\|^2I(\|y_{jj}\|> n^{1/4})+\E^2 \|y_{jj}\|I(\|y_{jj}\|> n^{1/4})\)^2\\
                      &\leq 4\E^2 \|y_{jj}\|^2I(\|y_{jk}\|> n^{1/4})\leq 4M^2n^{-1/2},
\end{align*}
which immediately imply,
 \begin{align*}
&\E \Bigg(n^{-2}\sum_{j\neq k}\(1-\sigma_{jk}^{-1}\)^2\lj y_{jk}I(\|y_{jk}\|\leq n^{1/4})-\E y_{jk}I(\|y_{jk}\|\leq n^{1/4})\rj^2 \notag\\
                               &+n^{-2}\sum_{j=1}^n\(1-\sigma\sigma_{jj}^{-1}\)^2\lj y_{jj}I(\|y_{jj}\|\leq n^{1/4})-\E y_{jj}I(\|y_{jj}\|\leq n^{1/4})\rj^2 \Bigg) \\
                               &\leq n^{-2}\sum_{j\neq k}\(1-\sigma_{jk}^2\)^2+\sigma^{-2}n^{-2}\sum_{j=1}^n\(\sigma^2-\sigma_{jj}^2\)^2 \\
                               &\leq 4\(M^2+\(\frac{M}{\sigma}\)^2\)n^{-3/2},
\end{align*}
and for any $\zeta>0$
 \begin{align*}
&{\rm Var} \Bigg(n^{-1/2-\zeta}\sum_{j\neq k}\(1-\sigma_{jk}^{-1}\)^2\lj y_{jk}I(\|y_{jk}\|\leq n^{1/4})-\E y_{jk}I(\|y_{jk}\|\leq n^{1/4})\rj^2 \notag\\
                               &+n^{-1/2-\zeta}\sum_{j=1}^n\(1-\sigma\sigma_{jj}^{-1}\)^2\lj y_{jj}I(\|y_{jj}\|\leq n^{1/4})-\E y_{jj}I(\|y_{jj}\|\leq n^{1/4})\rj^2 \Bigg) \\
                               &\leq 16M^2n^{-1-2\zeta}\(\sum_{j\neq k}\(1-\sigma_{jk}^2\)^4+\sigma^{-4}\sum_{j=1}^n\(\sigma^2-\sigma_{jj}^2\)^4\) \notag\\
                               &\leq 16\(1+16\(\frac{M}{\sigma}\)^4\)n^{-1-2\zeta}.
\end{align*}
Thus we obtain
 \begin{align}\label{13}
n^{1/2-\zeta}L^3\(\widehat F_n^s, F_n^w\)\to 0, \quad a.s..
\end{align}
Finally, by inequalities (\ref{11}), (\ref{12}), (\ref{13}) and Theorem \ref{mth2}, the proof of Theorem \ref{mth1} is complete.
\end{proof}


\end{document}